\newtheorem{theorem}{Theorem}[section]
\newtheorem{lemma}[theorem]{Lemma}
\newtheorem{cor}[theorem]{Corollary}
\newtheorem{prop}[theorem]{Proposition}
\theoremstyle{definition}
\newtheorem{definition}[theorem]{Definition}
\theoremstyle{remark}
\newtheorem{remark}[theorem]{Remark}
\numberwithin{equation}{section}
\newcommand{\N}{\mathbb{N}}
\newcommand{\R}{\mathbb{R}}
\DeclareMathOperator{\Iso}{Iso}
\DeclareMathOperator{\Diff}{Diff}
\DeclareMathOperator{\Id}{Id}
\DeclareMathOperator{\diam}{diam}
\DeclareMathOperator{\Ric}{Ric}
\DeclareMathOperator{\seccur}{sec}
\title{On the topology of moduli spaces of non-negatively curved Riemannian metrics}
\author{Wilderich Tuschmann\thanks{W.T. acknowledges support from the HeKKSaGon University Network and DFG Priority Program SPP 2026 'Geometry at Infinity'.} \and Michael Wiemeler\thanks{M.W. acknowledges support by DFG Grant HA 3160/11-1 and SFB 878.}}
\date{}
\begin{document}

\maketitle

\begin{abstract}
 We study spaces and moduli spaces of Riemannian metrics with non-negative Ricci or
 non-negative sectional curvature on closed and open manifolds.
 We construct, in particular, the first classes of  manifolds for which these moduli spaces have non-trivial rational homotopy,
 homology and cohomology groups.
 We also show that in every dimension at least seven
(respectively, at least eight) there exist infinite sequences of closed (respectively, open) manifolds of pairwise distinct homotopy type
for which the space and moduli space  of Riemannian metrics with non-negative sectional curvature 
has infinitely many path components. A completely analogous statement holds for spaces and  moduli 
spaces of
non-negative Ricci curvature metrics.
\end{abstract}

%-----------------------------------------------------------------------
% End of amsart.template
%-----------------------------------------------------------------------

%\

\section{Introduction}
\label{sec:introduction}

Consider a smooth manifold $M$ with a Riemannian metric satisfying some sort of geometric constraints
like, for example, having positive scalar curvature, non-negative Ricci or sectional curvature, negative sectional curvature, 
being Einstein, K\"ahler, Sasaki, etc.
A natural question to ponder is what the space of all such metrics on $M$ looks like.
One can also ask a similar question for its moduli space, i.e.,
its quotient by  
the full diffeomorphism group of $M$, acting by pulling back metrics.

These spaces are customarily equipped with the topology of smooth convergence on compact subsets and the 
 quotient topology, respectively (and precise definitions of all of these concepts will be given in the following section).
The topological properties of these objects hence provide the right means to measure "how many"
different metrics and geometries the manifold $M$ does exhibit,
and since Weyl's early result on the connectedness of the space
of positive Gaussian curvature metrics on $S^2$ (\cite{Weyl}) 
and the foundings of Teichm\"uller, infinte-dimensional manifold and Lie group theory,
uniformization and geometrization, the study of spaces of metrics and their moduli has been a topic of  interest for
differential geometers, global and geometric analysts and topologists alike.

Especially in recent years there has been intensive activity and substantial further progress on these issues, compare, for example,
\cite{bamler_kleiner},
\cite{banakhar:_spaces},
\cite{belegradekar:_gromov_hausd},
\cite{Bel18},
\cite{MR3619306},
\cite{MR2863912},
\cite{BKS2},
\cite{belegradek15:_connec},
\cite{botvinnik-ebert-randalwilliams},
\cite{botvinnik-ebert-wraith},
\cite{botvinnik95},
\cite{botvinnik96:_metric},
\cite{botvinnik10:_homot},
\cite{botvinnik17:_homot_ricci},
\cite{MR3551844},
\cite{Bu},
\cite{carr88:_const},
\cite{chen:_path},
\cite{chernysh:_r_m},
\cite{coda12:_defor},
\cite{crowley-schick},
\cite{crowley-schick-steimle},
\cite{D17},
\cite{DG19},
\cite{dessai16:_noncon_modul_spaces_nonneg_section},
\cite{ebert-randal-williams1},
\cite{ebert-randal-williams2},
\cite{ebert-wiemeler},
\cite{MR3660235},
\cite{MR3451392},
\cite{farrell09:_teich},
\cite{farrell10},
\cite{farrell10_2},
\cite{MR3629483},
\cite{Frenck1},
\cite{Frenck-Reinhold},
\cite{gajer87:_rieman},
\cite{Goo17},
\cite{gromov83:_posit_dirac_rieman},
\cite{hanke14:_space},
\cite{hitchin74:_harmon_spinor},
\cite{MR2198806},
\cite{KKRW20},
\cite{kreck93:_noncon},
\cite{lawson89:_spin},
\cite{lohkamp95:_curvat},
\cite{rosenberg01:_metric},
\cite{ruberman01:_posit_seiber_witten},
\cite{wraith16:_non},
\cite{tuschmann15:_modul_spaces_rieman_metric},
\cite{tuschmann_survey},
\cite{walsh11:_metric_morse_part_i},
\cite{walsh13:_cobor},
\cite{walsh14:_h},
\cite{wiemeler2},
\cite{wiemeler},
\cite{wraith11:_ricci},
\cite{wraith16:_path},
\cite{wraith_Oberwolfach}.

\

Many of the works just cited either address (spaces and/or) moduli spaces 
of positive scalar curvature metrics as well as corresponding modified
moduli spaces like, for example,  the so-called observer moduli space,
or else are concerned with spaces or moduli spaces of metrics with negative sectional curvature. 

\

On the other hand, results on non-negative Ricci or non-negative sectional curvature metrics have
remained quite scarce so far. 

\

Indeed, for closed manifolds and the (genuine) moduli spaces of metrics of these types,  
all results in general dimension that are known so far
(compare \cite{dessai16:_noncon_modul_spaces_nonneg_section},  \cite{DG19})
only show that there are manifolds for which 
the moduli spaces of metrics with non-negative sectional curvature 
are not connected and can even have an infinite number of 
components. 
Moreover, in \cite{wraith16:_non}
an analogous result is shown for spaces 
of non-negative Ricci curvature
metrics. 
Also, there are results about 
topological properties of the space of non-negatively curved metrics on two-spheres
and real projective planes and on the Gromov-Hausdorff metric on the set of isometry classes 
of non-negatively curved two-spheres, compare \cite{banakhar:_spaces},
\cite{belegradekar:_gromov_hausd}.
(For the results which are known in the non-compact case please see the paragraph following Remark \ref{sec:introduction-5} below.)

\ 

In particular, to the best of the authors' knowledge, the following theorem is the very first result
about the rational homotopy groups and the rational cohomology 
 of the moduli space of Ricci non-negative metrics on closed manifolds:

\

\begin{theorem}
\label{sec:products-with-higher-3-intro}
  Let \(M\) be a simply connected closed smooth manifold which admits a metric
  with non-negative Ricci curvature and let  \(T\) be a torus of dimension \(n\geq 4\), \(n\neq 8,9,10\).
  Then the moduli space \(\mathcal{M}_{\Ric\geq 0}(M\times T)\) of non-negatively Ricci curved metrics on $M\times T$
  has non-trivial higher rational cohomology groups and non-trivial higher rational homotopy groups.
\end{theorem}

\

We can even say more than is stated in the above theorem: If \(n=4\) it is the third rational cohomology group (rational homotopy group, respectively)  of the moduli space which is always non-trivial. In the case where \(n>4\) it is the fifth rational cohomology group (rational homotopy group, respectively)  which is non-trivial.

Hence, by taking products of spheres with tori we obtain in particular:

\

\begin{cor}
\label{sec:introduction-3}
  In every dimension \(n\geq 6\) there exist closed smooth manifolds \(M^n\) 
  with \(\pi_1(M)\cong\mathbb{Z}^4\) 
  for which the third rational cohomology group   \(H^3(\mathcal{M}_{\Ric\geq 0}(M);\mathbb{Q})\) and the third rational homotopy group \(\pi_3(\mathcal{M}_{\Ric\geq 0}(M))\otimes \mathbb{Q}\)
  of the moduli space of non-negatively Ricci curved metrics are non-trivial.
\end{cor}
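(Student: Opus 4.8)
The plan is to deduce the corollary from Theorem \ref{sec:products-with-higher-3} by making the "we can say more" remark explicit for the special case of a $4$-torus. First I would take $M$ to be the round sphere $S^k$ for $k\geq 2$, which is simply connected, closed, and carries a metric of non-negative (indeed positive) Ricci curvature, and set $T = T^4$, the flat $4$-torus. Then $M \times T = S^k \times T^4$ has dimension $n = k+4 \geq 6$, so every dimension $n\geq 6$ is realized, and its fundamental group is $\pi_1(S^k\times T^4)\cong\pi_1(T^4)\cong\mathbb{Z}^4$, as required. Theorem \ref{sec:products-with-higher-3} applies directly since $n=4$ is among the allowed torus dimensions (the excluded values are $8,9,10$), giving that $\mathcal{M}_{\Ric\geq 0}(S^k\times T^4)$ has non-trivial higher rational cohomology and higher rational homotopy groups.

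The remaining point is to pin down the degree: I would invoke the sharpened statement, announced immediately after Theorem \ref{sec:products-with-higher-3}, that for $n=4$ it is precisely the \emph{third} rational cohomology group $H^3(\mathcal{M}_{\Ric\geq 0}(M\times T);\mathbb{Q})$ and the \emph{third} rational homotopy group $\pi_3(\mathcal{M}_{\Ric\geq 0}(M\times T))\otimes\mathbb{Q}$ that are non-trivial. Applying this with $M = S^k$ and $T = T^4$ yields exactly the assertion of the corollary. So the corollary is a specialization of the theorem together with its degree refinement, combined with the trivial observations that $S^k$ qualifies as an admissible factor $M$ and that $\pi_1$ of a product with a simply connected factor is $\mathbb{Z}^4$.

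The only genuine content, and hence the main obstacle, is the degree refinement itself — i.e., justifying why the first non-vanishing higher rational (co)homology of $\mathcal{M}_{\Ric\geq 0}(M\times T^4)$ sits in degree $3$ rather than some other degree. This will presumably follow from tracking, through whatever fibration or pullback construction underlies the proof of Theorem \ref{sec:products-with-higher-3}, the rational homotopy of the relevant mapping space or classifying space associated to the torus and to the diffeomorphism group, and checking that the lowest-degree nontrivial class for a $4$-torus appears in degree $3$ (as opposed to degree $5$, which is what happens for higher-dimensional tori). Since that computation is carried out in the proof of the theorem and its accompanying remark, here one need only quote it; no independent argument is required for the corollary beyond the bookkeeping above.
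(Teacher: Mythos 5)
Your proposal is correct and matches the paper's intended argument: the paper explicitly introduces the corollary with ``by taking products of spheres with tori,'' so choosing $M=S^k$ ($k\geq 2$) and $T=T^4$ and invoking the degree-$3$ refinement stated after Theorem~\ref{sec:products-with-higher-3} is exactly the route taken. The degree refinement itself is supplied by the retraction of Theorem~\ref{sec:products-with-higher-2} onto $\mathcal{M}_{\seccur=0}(T^4)$ together with Proposition~\ref{sec:path-comp-rati}(3), as you anticipate.
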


\

Notice that in the above corollary the rank of \(\pi_1(M^n)\) is constant and, in particular, independent of the dimension $n$.
If one does not want to impose a bound on the rank of the fundamental group, one could also take \(M\) to be a high-dimensional torus and arrive at a similar conclusion as in the corollary
(see below).
 We think that it is an interesting question if this bound on the rank can be further improved. In particular, we would like to ask 
 if there are simply connected manifolds for which the conclusion of the above corollary also holds.

\

Since products of Ricci flat K3 surfaces and flat tori are Ricci flat (but not flat  nor do admit positive scalar curvature), another new consequence of Theorem \ref{sec:products-with-higher-3-intro} is that 
{\em in each dimension $n \geq 8$ there exist closed smooth manifolds $M$ for which the moduli space of Ricci flat metrics has non-trivial higher rational homotopy and cohomology groups.}

\

Let us now turn to other consequences and applications, namely, estimates for the number of path components of moduli spaces
of metrics with non-negative Ricci or non-negative sectional curvature. 
Notice here  in particular that a lower bound on the number of components in a moduli space is always 
also a lower bound on the number of components for the respective space of metrics.

\

\begin{theorem}
\label{sec:introduction-6}
  Let \(M\) be a simply connected closed smooth manifold
  which admits a Riemannian metric with non-negative Ricci curvature, and let $T^k$ denote
  a $k$-dimensional standard smooth torus, where $k\in\N$. 
  
  Then the moduli space \(\mathcal{M}_{\Ric\geq 0}(M\times T^k)\)  of Riemannian metrics 
  with non-negative Ricci curvature on $M\times T^k$   
 has at least as many path components as the moduli space  \(\mathcal{M}_{\Ric \geq 0}(M)\)
 of metrics with non-negative Ricci curvature on $M$.
 
 Moreover, this statement carries over  to all manifolds and moduli spaces of Riemannian metrics 
 for which the Cheeger-Gromoll Splitting Theorem holds, e.g., for metrics of non-negative sectional curvature as well as Ricci flat metrics.
 \end{theorem}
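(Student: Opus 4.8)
The plan is to exhibit a left inverse, on the level of path components, to the stabilisation map given by taking Riemannian products with a fixed flat torus. First I would fix a flat metric $g_{0}$ on $T^{k}$ and observe that $g\mapsto g\oplus g_{0}$ is a continuous, $\Diff(M)$-equivariant map from the space $\mathcal{R}_{\Ric\geq 0}(M)$ of non-negatively Ricci curved metrics on $M$ to $\mathcal{R}_{\Ric\geq 0}(M\times T^{k})$ — here $\Diff(M)$ acts on $M\times T^{k}$ via $\varphi\mapsto\varphi\times\Id_{T^{k}}$ — so it descends to a continuous map $\iota\colon\mathcal{M}_{\Ric\geq 0}(M)\to\mathcal{M}_{\Ric\geq 0}(M\times T^{k})$. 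It then suffices to prove that $\iota$ is injective on $\pi_{0}$, and for this I would construct a continuous map $\rho$ going back, with $\rho\circ\iota$ injective on $\pi_{0}$.

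To build $\rho$, take $h\in\mathcal{R}_{\Ric\geq 0}(M\times T^{k})$. Since $M$ is simply connected, $\pi_{1}(M\times T^{k})\cong\mathbb{Z}^{k}$ and the universal cover is the fixed smooth manifold $W:=M\times\mathbb{R}^{k}$, to which $h$ lifts as a complete, $\mathbb{Z}^{k}$-invariant metric $\widetilde h$ with $\Ric\geq 0$, depending continuously on $h$. By the Cheeger–Gromoll splitting theorem (applied to the cocompact $\mathbb{Z}^{k}$-action), $(W,\widetilde h)$ splits isometrically as $(\mathbb{R}^{a},\mathrm{eucl})\times(N_{h},g_{N_{h}})$ with $N_{h}$ closed, and a finite-index subgroup of $\pi_{1}(M\times T^{k})\cong\mathbb{Z}^{k}$ is isomorphic to $\mathbb{Z}^{a}$, which forces $a=k$. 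Hence the Euclidean de Rham factor always has dimension exactly $k$, $N_{h}$ is closed and simply connected (being homotopy equivalent to $M$), and when $h=g\oplus g_{0}$ one gets $N_{h}=M$, $g_{N_{h}}=g$.

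The hard part will be to show that $h\mapsto(N_{h},g_{N_{h}})$ varies continuously. The Euclidean factor corresponds to the distribution $\mathcal{P}_{h}\subset TW$ of $\widetilde h$-parallel vector fields, i.e.\ to the kernel of the continuously varying operator $V\mapsto\nabla^{\widetilde h}V$; since this kernel has constant dimension $k$, it — and therefore its $\widetilde h$-orthogonal complement $\mathcal{P}_{h}^{\perp}$ — varies continuously with $h$. The leaves of $\mathcal{P}_{h}^{\perp}$ are the compact slices $\{\mathrm{pt}\}\times N_{h}$ with the metric $g_{N_{h}}$; fixing a basepoint, the leaf through it is a compact embedded submanifold of $W$ varying continuously with $h$, so the diffeomorphism type of $N_{h}$ is locally constant and, after a continuously varying identification with a reference manifold, $g_{N_{h}}$ depends continuously on $h$. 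I would then package this as a continuous map $\rho\colon\mathcal{R}_{\Ric\geq 0}(M\times T^{k})\to\coprod_{[N]}\mathcal{M}_{\Ric\geq 0}(N)$, the disjoint union over diffeomorphism types being clopen by local constancy of $[N_{h}]$. As isometric metrics on $M\times T^{k}$ have isometric universal covers and hence isometric de Rham factors, $\rho$ is $\Diff(M\times T^{k})$-invariant and descends to $\bar\rho$ on $\mathcal{M}_{\Ric\geq 0}(M\times T^{k})$; by the computation above $\bar\rho\circ\iota$ is the inclusion of the summand $\mathcal{M}_{\Ric\geq 0}(M)$, which is injective on $\pi_{0}$. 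Therefore $\iota$ is injective on $\pi_{0}$, giving the first assertion.

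Finally, for the last sentence of the statement, the only geometric input used is the Cheeger–Gromoll splitting theorem, together with the compactness of the non-Euclidean de Rham factor and the Bieberbach-type fact pinning $a=k$; since all of this holds verbatim for non-negative sectional curvature (Toponogov splitting) and for Ricci-flat metrics, the same argument applies to those classes of metrics. The principal obstacle remains the continuity of the de Rham splitting: in general the dimension of the Euclidean factor is only lower semicontinuous, so the construction of $\rho$ would fail — it works here exactly because the fundamental group $\mathbb{Z}^{k}$ rigidly forces that dimension to equal $k$ for every non-negatively curved metric on $M\times T^{k}$.
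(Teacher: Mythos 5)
Your strategy --- realize \(\mathcal{M}_{\Ric\geq 0}(M)\) as a retract (on \(\pi_0\)) of \(\mathcal{M}_{\Ric\geq 0}(M\times T^k)\) via the soul of the de Rham splitting of the universal cover, with the product-with-a-flat-torus map as section --- is exactly the paper's (Corollary~\ref{sec:products-with-higher-1}, built on Theorem~\ref{sec:metrics-mtimes-t}), and your outline is essentially correct. Two points of comparison. First, where you map into the disjoint union \(\coprod_{[N]}\mathcal{M}_{\Ric\geq 0}(N)\) over diffeomorphism types and use only local constancy of \([N_h]\), the paper instead proves via the s-cobordism theorem (Lemma~\ref{sec:path-comp-rati-1}) that \(N_h\) is always diffeomorphic to \(M\), which forces the hypothesis \(\dim M\geq 5\); your disjoint-union device is a genuine simplification that removes that restriction. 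Second --- and here you are too quick --- the assertion that the kernel of \(V\mapsto\nabla^{\tilde h}V\) ``varies continuously because it has constant dimension \(k\)'' is not a proof: this is an operator between infinite-dimensional spaces of sections on the non-compact \(W\), the metrics converge only uniformly on compact sets, and constancy of the kernel dimension by itself does not give continuity of the kernel. What actually closes this step (and is the content of Theorem~\ref{sec:metrics-mtimes-t}) is a compactness argument: by continuous dependence of geodesics on the metric (Lemma~\ref{sec:souls-ricci-souls-1}) together with the characterization of lines in a split product, the parallel frames \(Y_{1n},\dots,Y_{kn}\) subconverge to parallel fields for \(\tilde h_0\) which are orthonormal at a basepoint, and only \emph{then} does the dimension count identify the limit span with the Euclidean factor of \(\tilde h_0\); a further explicit computation using \(\nabla^n Y_{in}=0\) is needed to upgrade \(C^0\)-convergence of the orthogonal projections to \(C^r\)-convergence of the induced soul metrics. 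So your plan is the right one and even slightly more general, but the ``constant-rank'' one-liner must be replaced by this subsequence argument, which is where essentially all the work in the paper's proof lies.
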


\

By \cite{dessai16:_noncon_modul_spaces_nonneg_section}, in each dimension \(4k+3\geq 7\) there exist infinite sequences of closed manifolds \(M\) such that the moduli space \(\mathcal{M}_{\sec\geq 0}(M)\) of Riemannian metrics with non-negative sectional curvature on \(M\) has infinitely many path components.
Taking products of these examples with tori and applying Theorem~\ref{sec:introduction-6}, we obtain:

\

\begin{cor}
  \label{sec:introduction-2}In every dimension $n\geq 7$ there exist infinite sequences of closed smooth manifolds $M$ of pairwise distinct homotopy type for which the moduli space $\mathcal{M}_{\sec \geq 0}(M)$ of Riemannian metrics with non-negative sectional curvature has infinitely many path components.
\end{cor}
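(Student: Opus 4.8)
To establish Corollary~\ref{sec:introduction-2}, the plan is to combine the examples produced by Dessai in \cite{dessai16:_noncon_modul_spaces_nonneg_section} with Theorem~\ref{sec:introduction-6}. Fix \(n\geq 7\) and put \(k:=n-7\geq 0\). By \cite{dessai16:_noncon_modul_spaces_nonneg_section} there is an infinite sequence \(\{N_i\}_{i\in\N}\) of closed smooth \(7\)-manifolds, pairwise of distinct homotopy type, each of which is simply connected and admits a metric of non-negative sectional curvature, and for which the moduli space \(\mathcal{M}_{\sec\geq 0}(N_i)\) has infinitely many path components. I would then set \(M_i:=N_i\times T^{k}\); this is a closed smooth \(n\)-manifold, and since \(N_i\) is simply connected, closed and carries a non-negatively curved (hence Ricci non-negative) metric, the hypotheses of Theorem~\ref{sec:introduction-6} are satisfied.

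Next I would apply Theorem~\ref{sec:introduction-6} in the version valid whenever the Cheeger--Gromoll Splitting Theorem holds, in particular for non-negative sectional curvature: it gives that \(\mathcal{M}_{\sec\geq 0}(M_i)=\mathcal{M}_{\sec\geq 0}(N_i\times T^{k})\) has at least as many path components as \(\mathcal{M}_{\sec\geq 0}(N_i)\), and the latter is infinite. Thus each \(M_i\) has a moduli space of non-negatively sectionally curved metrics with infinitely many path components, which is the main assertion. (For \(n\equiv 3\pmod 4\) one may instead take the \(n\)-dimensional Dessai examples directly, i.e.\ \(k=0\); the uniform choice \(k=n-7\) above simply has the virtue of covering all \(n\geq 7\) simultaneously.)

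It then remains to see that the manifolds \(M_i\) are pairwise of distinct homotopy type. For this I would pass to universal covers: \(\widetilde{M_i}=N_i\times\R^{k}\) deformation retracts onto \(N_i\), and a homotopy equivalence \(M_i\to M_j\) lifts to a homotopy equivalence \(\widetilde{M_i}\to\widetilde{M_j}\); hence \(N_i\simeq\widetilde{M_i}\simeq\widetilde{M_j}\simeq N_j\), so by the choice of the sequence \(\{N_i\}\) we must have \(i=j\). (For \(n=7\) this step is vacuous, since then \(M_i=N_i\).)

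I do not expect any serious obstacle here: essentially all of the content beyond \cite{dessai16:_noncon_modul_spaces_nonneg_section} is already encapsulated in Theorem~\ref{sec:introduction-6}, and once that result is at hand the argument is purely formal. The only points that require a little attention are that the Dessai sequence in dimension seven can indeed be chosen to consist of simply connected manifolds of pairwise distinct homotopy type --- which is the case for the relevant examples --- together with the elementary universal-cover observation above; neither of these presents a real difficulty.
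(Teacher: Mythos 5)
Your proposal is correct and follows exactly the route the paper indicates: the paper derives this corollary in one line by taking products of the Dessai--Klaus--Tuschmann examples with tori and applying Theorem~\ref{sec:introduction-6} (in its non-negative sectional curvature version), and your argument simply fills in the routine details (dimension $\geq 5$ for the retraction, and distinctness of homotopy types via universal covers), all of which check out.
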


\

\begin{remark}  We note that such examples have so far been known only in dimensions $4k+3$ where $k\geq 1$, compare \cite{dessai16:_noncon_modul_spaces_nonneg_section}.
Furthermore, by taking products of the manifolds from the above corollary with the real line and applying Theorem~\ref{sec:introduction-4} below, we can extend results of \cite{dessai16:_noncon_modul_spaces_nonneg_section} on non-compact
spaces which were only known for manifolds of dimension $8n$, $n\geq 1$ and obtain:
\end{remark}

\

\begin{cor}
  \label{sec:introduction-7}
In every dimension $n\geq 8$ there exist infinite sequences of non-compact connected smooth manifolds $M$ of pairwise distinct homotopy type for which the moduli space $\mathcal{M}_{\sec \geq 0}(M)$ of complete 
Riemannian metrics with non-negative sectional curvature has infinitely many path components.
\end{cor}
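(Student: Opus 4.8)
The plan is to bootstrap from the closed case treated in Corollary~\ref{sec:introduction-2} by crossing with a line, precisely as indicated in the remark preceding this corollary. The crucial ingredient is the non-compact companion of Theorem~\ref{sec:introduction-6}, namely Theorem~\ref{sec:introduction-4} below, which I will assume here: for a closed smooth manifold \(M\), the moduli space \(\mathcal{M}_{\sec\geq0}(M\times\mathbb{R})\) of complete non-negatively curved metrics has at least as many path components as \(\mathcal{M}_{\sec\geq0}(M)\).

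Granting this, fix a dimension \(n\geq 8\). Corollary~\ref{sec:introduction-2}, applied in dimension \(n-1\geq 7\), produces an infinite sequence \((M_i)_{i\in\N}\) of closed smooth manifolds, pairwise of distinct homotopy type, for which \(\mathcal{M}_{\sec\geq0}(M_i)\) has infinitely many path components for every \(i\). Set \(N_i:=M_i\times\mathbb{R}\). Each \(N_i\) is a connected, non-compact smooth \(n\)-manifold; it admits a complete non-negatively curved metric, for instance a product \(g_{M_i}+dt^2\) with \(g_{M_i}\) a non-negatively curved metric on \(M_i\); and because the projection \(N_i\to M_i\) is a homotopy equivalence, \(N_i\simeq N_j\) implies \(M_i\simeq M_j\) and hence \(i=j\), so the \(N_i\) are again pairwise of distinct homotopy type. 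By Theorem~\ref{sec:introduction-4}, \(\mathcal{M}_{\sec\geq0}(N_i)\) then has at least as many --- hence infinitely many --- path components. This establishes the corollary.

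Consequently all the content of the corollary is carried by Theorem~\ref{sec:introduction-4}, and that is where I expect the real work. I would prove it via the soul construction and the Cheeger-Gromoll splitting theorem: since \(M\times\mathbb{R}\) has exactly two ends, every complete metric on it contains a line, so whenever the metric \(g\) also satisfies \(\Ric\geq0\) the splitting theorem shows that \((M\times\mathbb{R},g)\) splits isometrically as \((N',h)\times(\mathbb{R},dt^2)\), where \(N'\) is necessarily closed (otherwise \(N'\times\mathbb{R}\) would have a single end), homotopy equivalent to \(M\), and non-negatively curved whenever \(g\) is; moreover the \(\mathbb{R}\)-direction of the splitting should be unique, being the parallel line field determined by the Busemann functions of the two ends. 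One then wants to conclude that, modulo the action of \(\Diff(M\times\mathbb{R})\), the complete non-negatively curved metrics on \(M\times\mathbb{R}\) are exactly the products \(g_M+dt^2\), and that the assignment \([g_M]\mapsto[g_M+dt^2]\) is injective on path components. The delicate part, which I regard as the main obstacle, is to do this continuously and compatibly with the quotient topologies: a path of complete non-negatively curved metrics on \(M\times\mathbb{R}\) joining \(g_M^0+dt^2\) to \(g_M^1+dt^2\) must be shown to induce, after splitting off the continuously varying line factor, a path of closed non-negatively curved manifolds \((N_s,h_s)\) with every \(N_s\) diffeomorphic to \(M\) (the totally geodesic hypersurfaces \(N_s\) forming an isotopy of submanifolds of \(M\times\mathbb{R}\) anchored at \(N_0=M\times\{0\}\)), and hence a path in \(\mathcal{M}_{\sec\geq0}(M)\) from \([g_M^0]\) to \([g_M^1]\); controlling the non-compact diffeomorphism group along this path, together with the uniqueness of the splitting in the presence of flat de Rham factors, is the crux.
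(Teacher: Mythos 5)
Your proposal is correct and follows exactly the paper's intended route: apply Corollary~\ref{sec:introduction-2} in dimension $n-1\geq 7$, cross with $\mathbb{R}$, and invoke the sectional-curvature version of Theorem~\ref{sec:introduction-4} (which the paper establishes via Theorem~\ref{sec:metrics-mtimes-r}, Corollary~\ref{sec:metr-prod-with} and the remark following it, showing $\mathcal{M}_{\sec\geq 0}(M\times\mathbb{R},M)$ is an open and closed subset of $\mathcal{M}_{\sec\geq 0}(M\times\mathbb{R})$ homeomorphic to $\mathcal{M}_{\sec\geq 0}(M)$). Your sketch of how to prove that theorem --- splitting off the line and establishing continuity of the resulting retraction --- also matches the paper's actual argument.
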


\

\begin{remark}
\label{sec:introduction-5} The first examples of closed (respectively, open) smooth manifolds whose \emph{space} 
 of metrics with non-negative
 Ricci curvature
has infinitely many path-components were constructed only recently by Schick and Wraith \cite{wraith16:_non}.
 Note that by Theorem 1.8 in that paper, the moduli space \(\mathcal{M}_{\Ric\geq 0}(M)\), where \(M\) is one of the examples constructed in \cite{dessai16:_noncon_modul_spaces_nonneg_section} has infinitely many components. Therefore we can state our Corollaries \ref{sec:introduction-2} and~\ref{sec:introduction-7} verbatim also for moduli spaces of Riemannian metrics
 with non-negative Ricci curvature.
\end{remark} 

\

Staying with non-compact manifolds, we would also  like to point out that
the soul theorem of Cheeger and Gromoll \cite{MR0309010} 
has been used by Kapovitch--Petrunin--Tuschmann \cite{MR2198806}, 
Belegradek--Kwasik--Schultz \cite{MR2863912}, 
\cite{BKS2} and
Belegradek--Farrell--Ka\-po\-vitch \cite{MR3619306} to study spaces and moduli spaces
 of complete non-negative sectional curvature metrics on open manifolds.
 Both  \cite{MR2198806} and \cite{MR2863912}  obtain examples of manifolds for which
the moduli spaces 
of non-negative sectional curvature metrics have an infinite number of components,
but in each dimension produce only finitely many such manifolds.
In  \cite{MR3619306} it is shown that
the  space  (not moduli space) of non-negative sectional curvature metrics 
can have  higher homotopy groups which can be non-trivial and contain elements of infinite order.
In \cite{Bel18}
it is shown that  the space of complete Riemannian metrics of nonnegative sectional curvature on certain open spin manifolds has nontrivial homotopy groups with elements of order two in infinitely many degrees.

These are the only results on (moduli) spaces on non-negative sectional curvature metrics 
on open manifolds of dimension greater than two known so far.
For the space of such metrics on the two-plane there are also results about connectedness properties
by  Belegradek and Hu \cite{belegradek15:_connec}.

\

In conjunction with Corollary \ref{sec:introduction-3}, the following theorem therefore  also implies the first
non-triviality results for higher rational cohomology and homotopy groups of moduli spaces of non-negatively sectional or Ricci-curved metrics on certain open manifolds:

\

\begin{theorem}
\label{sec:introduction-4}
  Let \(M\) be a closed manifold with non-negative Ricci curvature. Then
\(\mathcal{M}_{\Ric\geq 0}(M)\)
is homeomorphic to an open and closed subset of
     \( {\mathcal{M}}_{\Ric\geq 0}(M\times \mathbb{R})\).
In particular, \( {\mathcal{M}}_{\Ric\geq 0}(M\times \mathbb{R})\) has at least as many path components (and at least as complicated homotopy and cohomology)  as \(\mathcal{M}_{\Ric\geq 0}(M)\).
\end{theorem}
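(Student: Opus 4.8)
The plan is to construct an explicit map $\mathcal{M}_{\Ric\geq 0}(M)\to\mathcal{M}_{\Ric\geq 0}(M\times\mathbb{R})$ by sending a non-negatively Ricci curved metric $g$ on $M$ to the product metric $g+dt^2$ on $M\times\mathbb{R}$, where $dt^2$ is the standard metric on $\mathbb{R}$. Since Ricci curvature of a Riemannian product is the direct sum of the Ricci curvatures of the factors, $g+dt^2$ is indeed non-negatively Ricci curved and complete, so this assignment is well-defined on the level of metrics. It is continuous for the smooth topologies (convergence on compact sets), and it descends to a continuous map $\Phi\colon\mathcal{M}_{\Ric\geq 0}(M)\to\mathcal{M}_{\Ric\geq 0}(M\times\mathbb{R})$ since a diffeomorphism $\varphi$ of $M$ induces $\varphi\times\Id_{\mathbb{R}}$ of $M\times\mathbb{R}$ carrying $g+dt^2$ to $(\varphi^*g)+dt^2$.

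Next I would identify the image of $\Phi$. The key geometric input is the Cheeger--Gromoll Splitting Theorem: a complete manifold with $\Ric\geq 0$ containing a line splits isometrically as a product with $\mathbb{R}$. Any complete $\Ric\geq 0$ metric $h$ on $M\times\mathbb{R}$ contains a line (the $\mathbb{R}$-direction is proper and gives at least a ray in each end, and because $M$ is compact the two ends of $M\times\mathbb{R}$ are non-empty so one can produce a line by a standard limiting argument), hence $(M\times\mathbb{R},h)$ is isometric to $(N\times\mathbb{R},g_N+dt^2)$ for some complete $\Ric\geq 0$ manifold $N$. Since $M\times\mathbb{R}$ is the total space and splitting off a line is unique, $N$ is a closed manifold diffeomorphic to $M$, so up to the diffeomorphism group $h$ lies in the image of $\Phi$. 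Thus $\Phi$ is surjective onto a subset $U\subseteq\mathcal{M}_{\Ric\geq 0}(M\times\mathbb{R})$, and I would then argue $\Phi$ is injective onto $U$ and a homeomorphism onto $U$: injectivity and the homeomorphism property follow from the uniqueness of the de Rham splitting, which lets one recover $g$ (up to $\Diff(M)$) from $g+dt^2$ (up to $\Diff(M\times\mathbb{R})$), together with a check that $\Phi^{-1}$ is continuous; here one uses that in a product $N\times\mathbb{R}$ the factor $N$ is the quotient by the $\mathbb{R}$-action of translations along the splitting direction, which depends continuously on $h$.

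Finally I would show $U$ is open and closed in $\mathcal{M}_{\Ric\geq 0}(M\times\mathbb{R})$. Closedness: if $h_i\in U$ converge to $h$, each $h_i$ splits, the splitting directions subconverge, and $h$ splits as well, so $h\in U$ (using again that the diffeomorphism type of the closed factor is locally constant, hence still $M$). Openness is the subtler point and is really the main obstacle: one must show that every $\Ric\geq 0$ metric $h$ sufficiently $C^\infty$-close on compact sets to a product $g+dt^2$ again splits isometrically with closed factor diffeomorphic to $M$. This is where the structure theory from the splitting theorem is used in a quantitative/limiting form: if not, one finds a sequence $h_i\to g+dt^2$ with $h_i$ not splitting (or splitting with a different factor), and derives a contradiction by noting that the $h_i$ are complete with $\Ric\geq 0$ and contain longer and longer almost-lines, forcing a splitting of the limit with the right factor and then, via stability of the $\mathbb{R}$-action, of the $h_i$ themselves for large $i$. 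Once $U$ is shown to be clopen, the statement about path components and about homotopy and cohomology is immediate, since a clopen subset is a union of path components and $\Phi$ restricts to a homeomorphism $\mathcal{M}_{\Ric\geq 0}(M)\xrightarrow{\ \cong\ }U$.
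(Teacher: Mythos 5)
Your overall strategy (the product map \(g\mapsto g+dt^2\), the Splitting Theorem to identify the image, and clopen-ness via convergence of the splitting data) is the same as the paper's, which realizes \(\mathcal{M}_{\Ric\geq 0}(M)\) as \(\mathcal{M}_{\Ric\geq 0}(M\times\mathbb{R},M)\), the set of classes of metrics isometric to \(h+dt^2\) with \(h\) a metric on \(M\). But there is a genuine error in your identification of the image: from \((M\times\mathbb{R},h)\cong (N,g_N)\times\mathbb{R}\) with \(N\) closed you conclude that \(N\) is diffeomorphic to \(M\) ``since splitting off a line is unique.'' Uniqueness of the isometric splitting of a \emph{fixed} metric does not imply that any two closed manifolds \(N, M\) with \(N\times\mathbb{R}\) diffeomorphic to \(M\times\mathbb{R}\) are themselves diffeomorphic; in general they are not (this is the non-unique souls/non-cancellation phenomenon, cf.\ the paper's remark on Proposition 2.8 of Belegradek--Kwasik--Schultz, where the target decomposes as a disjoint union over \emph{all} such \(M_i\)). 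As written, your argument would show that \(U\) is all of \(\mathcal{M}_{\Ric\geq 0}(M\times\mathbb{R})\), which is false in general and is precisely why the theorem asserts only an open \emph{and closed} subset. The fix is to define \(U\) as the set of classes whose closed splitting factor is diffeomorphic to \(M\); the clopen-ness of \(U\) then amounts to showing that this diffeomorphism type is locally constant on the moduli space, which is the actual content of the argument.

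Relatedly, your openness paragraph is the step carrying the analytic weight, and as phrased it is confused: every complete \(\Ric\geq 0\) metric on \(M\times\mathbb{R}\) \emph{does} split (two ends, hence a line), so there is no dichotomy ``splits versus does not split''; the only issue is whether nearby metrics split with a factor of the same diffeomorphism type and whether the induced metrics on the factor converge (the latter is also what you need for continuity of \(\Phi^{-1}\)). This requires a real proof, not just ``stability of the \(\mathbb{R}\)-action'': one must show that for \(g_n\to g_0\) in \(C^r\), \(r>1\), the lines of \(g_n\) converge in \(C^1\) to those of \(g_0\) (via continuous dependence of geodesics on the metric and the uniqueness of lines in a product with compact factor), hence the parallel unit fields \(X_n\) and the orthogonal distributions converge, so that the projection of the \(g_0\)-cross-section onto the \(g_n\)-cross-section is eventually a diffeomorphism \(\phi_n\) with \(\phi_n^*h_n\to h_0\). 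That is the paper's Theorem 3.1; your sketch points in the right direction but does not supply this argument, and without it neither the clopen-ness of \(U\) nor the continuity of \(\Phi^{-1}\) is established.
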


(For a more precise version of Theorem \ref{sec:introduction-4} see Corollary~\ref{sec:metr-prod-with}.)

\

\

To obtain the results of the present paper, we employ 
 in a new way the splitting theorem of Cheeger and Gromoll 
\cite{MR0303460}. It allows us to construct maps from moduli spaces to other spaces whose topology is easier to understand.

For example, by the splitting theorem, a closed non-negatively Ricci-curved manifold \((M,g)\) with \(\pi_1(M)=\mathbb{Z}^n\) is isometric to a bundle with simply connected non-negative Ricci curved fiber \((N,h)\) over a flat \(n\)-dimensional torus \((T^n,h')\). 
We define maps
\begin{align*}
  \mathcal{M}_{\Ric \geq 0}(M)&\rightarrow \mathcal{M}_{\Ric\geq 0}(N)&\mathcal{M}_{\Ric \geq 0}(M)&\rightarrow \mathcal{M}_{\sec = 0}(T^n)\\
  [g]&\mapsto[h]&[g]&\mapsto [h'].
\end{align*}
If \(M\) is diffeomorphic to the product \(N\times T^n\), a crucial step in our arguments is then to prove that these maps 
are retractions. 
Therefore the topology of \(\mathcal{M}_{\Ric\geq 0}(M)\) is at least as complicated as the topology of \(\mathcal{M}_{\Ric\geq 0}(N)\) and \(\mathcal{M}_{\sec = 0}(T^n)\).

It is known that for certain choices of \((4k+3)\)-dimensional manifolds \(N\), where \(k\geq 1\), the moduli space \(\mathcal{M}_{\Ric\geq 0}(N)\) is non-connected.
Hence, by the above argument, we obtain non-connectedness results for moduli spaces in arbitrary dimensions \(4k+3+n\geq 7\) with \(n=0,1,2,\ldots\in \mathbb{N}\).

Another  ingredient in the proofs of the above results consists in having knowledge
about the moduli space of flat metrics on an $n$-dimensional torus.
By classical work of Wolf \cite{MR0322748}, it is known that the moduli space
\(\mathcal{M}_{\sec = 0}(T^n)\) of such metrics  is homeomorphic to the biquotient space \(O(n)\backslash GL(n,\R)/GL(n;\mathbb{Z})\).
However, and quite surprisingly, it seems that until now the topology of this moduli space 
has never been further studied by geometers (compare here, e.g., also the corresponding remarks in \cite{BDP}).

But let us now observe that this latter space is actually a model for the classifying space \(B_{\mathcal{FIN}}GL(n,\mathbb{Z})\) for the family of finite subgroups of \(GL(n,\mathbb{Z})\). This fact does not seem to have been noticed before in the context of moduli spaces.
But it was, of course, observed in the context of classifying spaces (see for example \cite{MR2195456}) which indeed have been studied intensively by topologists.
Employing their work (see Proposition~\ref{sec:path-comp-rati} for details) 
in our study of moduli spaces, we can then detect non-trivial classes in rational cohomology groups and rational homotopy groups of \(\mathcal{M}_{\Ric\geq 0}(M)\).

\

\begin{remark}
A natural question to ask is if the main results of this paper do also apply when the products with tori used here 
will get replaced by products with some general
closed flat manifold. Thus, let \(M\) be diffeomorphic to \(N\times F\), where \(N\) is a closed simply connected manifold with non-negative Ricci curvature and \(F\) is a closed flat manifold.

   Then, using arguments similar to the ones outlined above and detailed in the following sections, one can define corresponding retractions
\begin{align*}
  \mathcal{M}'_{\Ric \geq 0}(M)&\rightarrow \mathcal{M}_{\Ric\geq 0}(N)& \text{and} \quad \qquad \mathcal{M}_{\Ric \geq 0}(M)&\rightarrow \mathcal{M}_{\sec = 0}(F)
\end{align*}
from a suitable  open and closed subset \(\mathcal{M}'_{\Ric \geq 0}(M)\) of \(\mathcal{M}_{\Ric \geq 0}(M)\) onto $\mathcal{M}_{\Ric\geq 0}(N)$ 
and from  \(\mathcal{M}_{\Ric \geq 0}(M)\) onto $\mathcal{M}_{\sec = 0}(F)$, respectively. 

Therefore Theorem~\ref{sec:introduction-6} does also hold when the $k$-torus $T^k$
figuring there is replaced by any other closed flat manifold $F$. Moreover,
Theorem~\ref{sec:products-with-higher-3-intro} and its conclusions will carry over as well, provided that
the moduli space of flat metrics on \(F\) does exhibit corresponding topological properties as the moduli space  of flat metrics on $T^k$.
In particular, it is likely that, with respect to the number of dimensions that
they are covering in their present form,  both Theorem 1.1 and Corollary 1.2 can still be slightly improved. \end{remark}

\

The remaining parts of this article are organised as follows:
In section 2 we introduce relevant notations and state some preliminaries. 
We discuss non-negatively Ricci curved metrics on products \(M\times \mathbb{R}\) in section \ref{sec:metrics-mtimes-r-1}.
In section \ref{sec:metrics-mtimes-s1-3}  we investigate metrics on \(M\times S^1\).
Section \ref{sec:products-with-higher}  is devoted to a study of products of simply connected manifold \(M\)  with higher dimensional tori
and the components and cohomology groups of their moduli spaces.

\

It is our pleasure to thank Igor Belegradek, Boris Botvinnik, Anand Dessai, Stephan Klaus, and David Wraith for their comments on an earlier version of this paper,
as well as the anonymous referees for providing further remarks and their careful reading.

\

\section{Spaces and moduli spaces of metrics and Ricci souls}

\subsection{Spaces and moduli spaces of Riemannian metrics}

In this subsection we recall some properties of spaces and moduli spaces of Riemannian metrics. Let us start with introducing some notation.

\begin{definition}
  Let \(M\) be a smooth manifold. Then we denote by
  \begin{enumerate}
  \item \(\mathcal{R}^r(M)\) the space of complete smooth (i.e. \(C^\infty\)) Riemannian metrics on \(M\) equipped with the \(C^r\)-topology of uniform convergence on compact subsets of \(M\).
  \item \(\mathcal{M}^r(M)=\mathcal{R}^r(M)/\Diff(M)\) the moduli space of metrics on \(M\) equipped with the quotient topology.
    Here the diffeomorphism group \(\Diff(M)\) of \(M\) acts by pulling back metrics.
  \item \(\mathcal{R}^r(M\times \mathbb{R},M)\) the space of metrics on \(M\times \mathbb{R}\) which are isometric to product metrics \(h+dt^2\), where \(h\) is a complete Riemannian metric on \(M\), equipped with the \(C^r\)-topology of uniform convergence on compact subsets of \(M\times \mathbb{R}\).
       \item \(\mathcal{M}^r(M\times \mathbb{R},M)=\mathcal{R}^r(M\times \mathbb{R},M)/\Diff(M\times \mathbb{R})\) equipped with the quotient topology.
  \end{enumerate}
  We will add subscripts to the above notation to indicate curvature constraints.
\end{definition}

Note that the space \(\mathcal{R}^r(M)\) is convex.
Therefore all of its open subsets (and also all open subsets of the moduli space \(\mathcal{M}^r(M)\)) are locally path connected.
Hence their connected components coincide with their path components.

Notice also that since the natural map from a given space of metrics to its corresponding moduli space is surjective, the number of components of the moduli space is a lower bound for the number of components of the space itself.

We will show in Theorem \ref{sec:metrics-mtimes-r} that \(\mathcal{R}_{\Ric\geq 0}^r(M\times \mathbb{R},M)\) (and \(\mathcal{M}_{\Ric\geq 0}^r(M\times \mathbb{R},M)\)) is an open and closed subspace of \(\mathcal{R}_{\Ric\geq 0}^r(M\times \mathbb{R})\) (and \(\mathcal{M}_{\Ric\geq 0}^r(M\times \mathbb{R})\), respectively).
So lower bounds for the number of components of the former space (and moduli space) give lower bounds for the number of components of the latter space (and moduli space, respectively).

\subsection{Souls and Ricci souls}

Recall that
the Cheeger-Gromoll Soul Theorem \cite{MR0309010} states that
a complete open Riemannian manifold \((M,g)\) of non-negative sectional curvature is diffeomorphic to the normal bundle of a closed convex totally geodesic submanifold \(S\subset M\), the so-called soul of \(M\).

Moreover, the Cheeger-Gromoll Splitting Theorem \cite{MR0303460} says that 
if \((M,g)\) is a complete manifold of non-negative Ricci curvature which contains a line, then
 \((M,g)\) is isometric to a Riemannian product \((N,h)\times \mathbb{R}^k\), where \((N,h)\) is a complete manifold of non-negative Ricci curvature which does not contain a line, and where  \(\R^k\) is flat \(k\)-dimensional Euclidean space.

 Notice that if \((M,g)\) is the Riemannian universal  covering of a closed non-negatively Ricci curved manifold, then the manifold \(N\) given by the Splitting Theorem is closed.

 Motivated by the Soul Theorem we introduce  for non-negatively Ricci curved manifolds the following notation.

\begin{definition}
  \begin{enumerate}
  \item   If \((M,g)\) is a simply connected non-negatively Ricci curved complete Riemannian manifold that is isometric to a Riemannian product \((N,h)\times \mathbb{R}^k\) with \(N\) closed, we call \((N,h)\) the \emph{Ricci soul} of \((M,g)\).
  \item If \((M,g)\) is a closed Riemannian manifold of non-negative Ricci curvature, 
 we call the Ricci soul \((N,h)\) of the  Riemannian universal covering of \((M,g)\) the \emph{Ricci soul} of \((M,g)\).
  \end{enumerate}
\end{definition}

\subsection{Geodesics and lines}

In this section we collect some %known 
facts about geodesics and lines in Riemannian manifolds that we will put to use in later sections of this work.

\begin{lemma}
  \label{sec:souls-ricci-souls}
  Let \((M,g)=(N,h)\times \mathbb{R}\) with \(N\) compact and \(x\in N\).
  Then there is a unique unit speed line \(\gamma_x:\mathbb{R}\rightarrow M\) passing from \(-\infty\) to \(+\infty\) with \(\gamma_x(0)=(x,0)\).
  Indeed, \(\gamma_x\) is given by \(\gamma_x(t)=(x,t)\).
\end{lemma}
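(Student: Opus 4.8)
The plan is to prove both the existence/uniqueness of a line through $(x,0)$ and the explicit identification $\gamma_x(t) = (x,t)$ directly from the product structure, using the fact that $N$ is compact, hence bounded.

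\textbf{Existence.} First I would simply verify that the curve $c(t) = (x,t)$ is a unit speed line. Since the metric is the product $h + dt^2$, the curve $c$ has $\dot c(t) = \partial_t$, so it has unit speed, and it is a geodesic because the $\mathbb{R}$-factor is flat and the $N$-coordinate is constant (the geodesic equations decouple over a Riemannian product). Moreover, for $s < t$ the distance satisfies $d\bigl((x,s),(x,t)\bigr) = |t-s|$: the inequality $\leq$ is clear from the curve itself, and the inequality $\geq$ follows because the projection $M \to \mathbb{R}$ is distance non-increasing, so any path from $(x,s)$ to $(x,t)$ has length at least $|t-s|$. Hence $c$ restricted to any interval realizes the distance between its endpoints, i.e.\ $c$ is a line. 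This shows $\gamma_x(t) = (x,t)$ works.

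\textbf{Uniqueness.} Now suppose $\sigma:\mathbb{R}\to M$ is any unit speed line with $\sigma(0) = (x,0)$, going from $-\infty$ to $+\infty$; write $\sigma(t) = (\alpha(t), \beta(t))$ with $\alpha(t)\in N$ and $\beta(t)\in\mathbb{R}$. The key step is to show $\alpha$ is constant. Since $\sigma$ is a unit speed geodesic in the product, we have $|\dot\alpha(t)|_h^2 + \dot\beta(t)^2 = 1$ for all $t$, and both $\alpha$ and $\beta$ are themselves geodesics (in $N$ and in $\mathbb{R}$ respectively) traversed at constant speed; so there are constants $a,b \geq 0$ with $a^2 + b^2 = 1$, $|\dot\alpha|\equiv a$, $|\dot\beta|\equiv b$. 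Because $\sigma$ is a \emph{line}, for every $T>0$ we have $d\bigl(\sigma(-T),\sigma(T)\bigr) = 2T$. On the other hand, using the product distance formula $d\bigl((p,s),(q,u)\bigr)^2 = d_N(p,q)^2 + (s-u)^2$ together with the triangle inequality and the fact that $\diam(N) =: D < \infty$ (here $N$ compact), we get
\begin{equation*}
  2T = d\bigl(\sigma(-T),\sigma(T)\bigr) \leq \sqrt{D^2 + (2bT)^2} \leq D + 2bT.
\end{equation*}
Dividing by $T$ and letting $T\to\infty$ forces $b \geq 1$, hence $b=1$, $a=0$, so $\dot\alpha \equiv 0$ and $\alpha$ is constant, equal to $\alpha(0) = x$. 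Then $\beta$ is a unit speed geodesic in $\mathbb{R}$ with $\beta(0)=0$, so $\beta(t) = \pm t$; the orientation convention ``$-\infty$ to $+\infty$'' selects $\beta(t) = t$. Therefore $\sigma(t) = (x,t) = \gamma_x(t)$.

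\textbf{Main obstacle.} The substantive point is the uniqueness argument, and specifically the use of compactness of $N$: it is exactly the finiteness of $\diam(N)$ that makes the $N$-component of a line negligible at infinity and forces the line to run straight up the $\mathbb{R}$-factor. I would take care to state the product distance formula (or at least the two one-sided estimates from the projections $M\to N$ and $M\to\mathbb{R}$ being $1$-Lipschitz) cleanly, since everything rests on it; the remaining steps are routine facts about geodesics in Riemannian products. One minor point to address is making precise the convention that $\gamma_x$ ``passes from $-\infty$ to $+\infty$'' so that the sign ambiguity $\beta(t)=\pm t$ is resolved and uniqueness genuinely holds on the nose rather than up to reparametrization.
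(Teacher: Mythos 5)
Your proof is correct and follows essentially the same strategy as the paper's: decompose the line into its $N$- and $\mathbb{R}$-components, note each is a constant-speed geodesic, and use the finiteness of $\diam(N)$ together with the line condition $d(\sigma(-T),\sigma(T))=2T$ as $T\to\infty$ to force the $\mathbb{R}$-component to have unit speed. The only cosmetic difference is that you invoke the product distance formula where the paper uses a two-step triangle inequality, and you spell out the existence half explicitly; both are fine.
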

\begin{proof}
  Let \(\gamma:\mathbb{R}\rightarrow M\) be a unit speed line such that
  \begin{align*}
    \gamma(t)&=(\gamma_N(t),\gamma_{\mathbb{R}}(t))&\gamma(0)&=(x,0)&\lim_{t\to\pm\infty} \gamma_{\mathbb{R}}(t)&=\pm \infty.
  \end{align*}
  Then \(\gamma_N\) and \(\gamma_{\mathbb{R}}\) are geodesics in \(N\) and \(\mathbb{R}\), respectively. In particular, \(\gamma_{\mathbb{R}}\) is of the form
  \(\gamma_{\mathbb{R}}(t)=ct,\)
  with \(0\leq c\leq 1\). We then get from the triangle inequality
  \begin{align*}
    |t|&=d(\gamma(0),\gamma(t))\\
    &\leq d((x,0),(\gamma_N(t),0))+d((\gamma_N(t),0),(\gamma_N(t),\gamma_{\mathbb{R}}(t)))\\
                   &\leq \diam(N,h) + c|t|.
  \end{align*}
  Since \(N\) is compact, for \(t\to \pm \infty\), this inequality can only hold if \(c=1\).
  Because \(\gamma\) has unit speed it follows that \(\gamma=\gamma_x\) and the claim follows.
\end{proof}

\begin{remark}
  In the same way one can prove  a generalised version of Lemma \ref{sec:souls-ricci-souls} 
  for products of the form \((N,h)\times \mathbb{R}^k\), $k\ge 1$, with \(N\) compact. In these products all unit speed lines 
  are then of the form \(\gamma(t)=(x,v_1t+v_2)\) with \(x\in N\), \(v_1,v_2\in \mathbb{R}^k\) and \(\|v_1\|=1\).
\end{remark}

\begin{lemma}
  \label{sec:souls-ricci-souls-1}
  Let \(M\) be a smooth manifold and \((g_n)\) be a sequence of Riemannian metrics on \(M\) which converges in the \(C^1\)-topology of uniform convergence on compact subsets of \(M\) to a metric \(g_0\).
  Moreover, let \(\gamma_n:TM\times \mathbb{R}\rightarrow M\) be mappings such that, for \(v\in TM\), \(\gamma_n(v,\cdot)\) is the geodesic with initial values \(\gamma_n(v,0)=\pi(v)\), and such that \(\gamma_n'(v,0)=\frac{\partial \gamma_n}{\partial t}(v,0)=v\)   with respect to the metric \(g_n\),
  where \(\pi:TM\rightarrow M\) is the canonical  projection.

  Then \((\gamma_n,\gamma_n')\) converges in the \(C^0\)-topology of uniform convergence on compact subsets of \(TM\times \mathbb{R}\) to \((\gamma_0,\gamma_0')\).
  In particular, \(\gamma_n(v,\cdot)\) converges in the \(C^1\)-topology of uniform convergence on compact subsets of \(\mathbb{R}\) to \(\gamma_0(v,\cdot)\).
\end{lemma}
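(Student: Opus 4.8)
The plan is to recognise the maps \((\gamma_n,\gamma_n')\colon TM\times\mathbb{R}\to TM\) as the flows of the geodesic sprays of the metrics \(g_n\), and then to invoke continuous dependence of solutions of ordinary differential equations on their right hand side, paying attention to the fact that \(C^1\)-convergence of metrics yields only \(C^0\)-convergence of the relevant coefficients. In local coordinates on \(M\) the geodesic equation for \(g_n\) reads \(\ddot x^k+{}^n\Gamma^k_{ij}(x)\,\dot x^i\dot x^j=0\), with \({}^n\Gamma^k_{ij}\) the Christoffel symbols of \(g_n\); writing \(y=\dot x\) this becomes a first order system on a chart of \(TM\), i.e. \((\gamma_n,\gamma_n')\) is the flow of the smooth geodesic spray \(S_n\) of \(g_n\), and completeness of \(g_n\) guarantees that this flow is defined on all of \(TM\times\mathbb{R}\). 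Since the Christoffel symbols are built from \(g_n\) and its first derivatives only, the hypothesis \(g_n\to g_0\) in the \(C^1\)-topology of uniform convergence on compact sets implies \({}^n\Gamma^k_{ij}\to{}^0\Gamma^k_{ij}\), and hence \(S_n\to S_0\), in the \(C^0\)-topology of uniform convergence on compact subsets of \(TM\).

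Now fix a compact set \(K\subseteq TM\times\mathbb{R}\), say \(K\subseteq L\times[-T,T]\) with \(L\subseteq TM\) compact. By continuity of the flow of \(S_0\), the set \(L':=\{(\gamma_0,\gamma_0')(v,t):v\in L,\ |t|\le T\}\) is compact; fix a compact neighbourhood \(L''\) of \(L'\). The decisive point is that, although we only have \(C^0\)-convergence \(S_n\to S_0\) and hence no uniform Lipschitz control on the \(S_n\), the limit metric \(g_0\) is smooth, so \(S_0\) is locally Lipschitz; let \(C_0\) be a Lipschitz constant for \(S_0\) on \(L''\). As long as \((\gamma_n,\gamma_n')(v,t)\) stays in \(L''\) one estimates, in coordinates,
\begin{align*}
\frac{d}{dt}\bigl|(\gamma_n,\gamma_n')(v,t)-(\gamma_0,\gamma_0')(v,t)\bigr|
&\le \bigl|S_n\bigl((\gamma_n,\gamma_n')(v,t)\bigr)-S_0\bigl((\gamma_0,\gamma_0')(v,t)\bigr)\bigr|\\
&\le \|S_n-S_0\|_{C^0(L'')}+C_0\,\bigl|(\gamma_n,\gamma_n')(v,t)-(\gamma_0,\gamma_0')(v,t)\bigr|,
\end{align*}
so that Gronwall's inequality gives \(|(\gamma_n,\gamma_n')(v,t)-(\gamma_0,\gamma_0')(v,t)|\le \|S_n-S_0\|_{C^0(L'')}\,T\,e^{C_0T}\) for \(|t|\le T\). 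A standard continuation argument shows that for \(n\) large enough the trajectory \((\gamma_n,\gamma_n')(v,\cdot)\) indeed cannot leave \(L''\) on \([-T,T]\): once the right hand side above is smaller than the distance from \(L'\) to the complement of \(L''\), the set of times in \([-T,T]\) for which the trajectory lies in the interior of \(L''\) is open, closed and non-empty. Covering \(L'\) by finitely many coordinate charts of \(TM\) and taking the supremum over \(v\in L\), one obtains \((\gamma_n,\gamma_n')\to(\gamma_0,\gamma_0')\) uniformly on \(K\). (Alternatively, this is the classical theorem on continuous dependence of ODE solutions on the right hand side, whose only hypothesis, uniqueness for the limit equation, is provided by local Lipschitzness of \(S_0\); a routine compactness argument then upgrades convergence for fixed initial vectors to uniform convergence over compact sets of initial vectors.)

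Finally, for a fixed \(v\in TM\) the curve \(t\mapsto\gamma_n(v,t)\) and its velocity \(t\mapsto\gamma_n'(v,t)\) are precisely the two components of the flow, so the uniform convergence on compact subsets of \(\mathbb{R}\) just established is exactly \(C^1\)-convergence of \(\gamma_n(v,\cdot)\) to \(\gamma_0(v,\cdot)\), proving the last assertion. The one genuine obstacle, addressed above, is that \(C^1\)-convergence of the metrics produces only \(C^0\)-convergence of the geodesic sprays, so the usual continuous-dependence estimate must be run with a Lipschitz bound for the \emph{limit} spray \(S_0\), together with the continuation argument that keeps the approximating trajectories inside a fixed compact set.
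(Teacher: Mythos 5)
Your proposal is correct and follows essentially the same route as the paper: pass to local coordinates, observe that $C^1$-convergence of the metrics gives $C^0$-convergence of the Christoffel symbols (hence of the geodesic sprays), and conclude by continuous dependence of ODE solutions on the right-hand side. The paper simply cites this last step as standard, whereas you supply the Gronwall-plus-continuation argument showing why $C^0$-convergence of the sprays together with local Lipschitzness of the limit spray suffices; this is a legitimate and welcome filling-in of the detail, not a different method.
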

\begin{proof}
  This fact can easily be deduced from \cite{sakai83}, but for the convenience of the reader 
let us include a short own proof here. Indeed, in local coordinates we may write
  \begin{align*}
    \gamma_n&=(\gamma_n^i)_{i=1,\dots,m}&\gamma_n'=(\xi_n^i)_{i=1,\dots,m}.
  \end{align*}

  Then \((\gamma_n,\gamma'_n)\) is a solution to the system of ODEs
  \begin{align*}
    \frac{\partial \gamma_n^i}{\partial t}&=\xi_n^i &i&=1,\dots,m,\\
    \frac{\partial \xi_n^i}{\partial t}&=-\sum_{j,k=1}^m \Gamma^i_{njk}\xi_n^j\xi_n^k&i&=1,\dots,m,
  \end{align*}
  with initial value \((\gamma_n(v,0),\gamma_n'(v,0))=(\pi(v),v)\).
  Here the \(\Gamma^i_{njk}\) are the Christoffel symbols of the metric \(g_n\).
  Note that the \(\Gamma_{njk}^i\) converge in \(C^0\)-topology to \(\Gamma_{0jk}^i\) because the metrics converge in \(C^1\)-topology.
  
  Since the solution of a system of ODEs depends continuously on the parameters, the claim follows.
\end{proof}

\section{Metrics on products with the Euclidean line $\R$}
\label{sec:metrics-mtimes-r-1}

In this section we prove the following theorem and apply it to moduli spaces.

\begin{theorem}
\label{sec:metrics-mtimes-r}
  Let \((M,g_0)\) be a complete connected Riemannian manifold which is isometric to \((N_0,h_0)\times \R\)  
  where \((N_0,h_0)\) is closed and \(\mathbb{R}\) is equipped with the standard metric. Moreover, let \(g_n\) be  metrics on \(M\) which converge,
  as $n\to\infty$, for  \(r> 1\) to  \(g_0\)  with respect to 
  the \(C^r\)- topology of
  uniform convergence on compact subsets.
   Assume that there exist  isometries \(\psi_n:(M,g_n)\rightarrow (N_n,h_n)\times\mathbb{R}\) where \((N_n,h_n)\) is closed.
Then

\begin{itemize}
  \item for sufficiently large \(n\) there are diffeomorphisms \(\phi_n: N_0 \rightarrow N_n\), such that
  \item the metrics  \(\phi_n^*h_n\) converge in the \(C^r\)- topology to \(h_0\).
  \end{itemize}
\end{theorem}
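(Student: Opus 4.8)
The plan is to extract the Ricci souls $N_n$ back onto $N_0$ by using the lines in the products $(N_n,h_n)\times\mathbb{R}$ together with the convergence of geodesics from Lemma~\ref{sec:souls-ricci-souls-1}. First, I would use Lemma~\ref{sec:souls-ricci-souls} to identify, for each $n$, the soul factor $N_n$ inside $(M,g_n)$ geometrically: pulling back via $\psi_n^{-1}$, the slice $N_n\times\{0\}$ is exactly the set of points $p\in M$ through which there is a unit-speed $g_n$-line whose $\mathbb{R}$-coordinate goes from $-\infty$ to $+\infty$, and the tangent direction of that line at $p$ spans the $\mathbb{R}$-factor. Equivalently, $T(N_n\times\{0\})$ at $p$ is the $g_n$-orthogonal complement of that line direction. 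So the data of the splitting is encoded in a unit vector field $v_n$ on $M$ (the "line direction"), and a point lies on the soul slice iff the $g_n$-geodesic in direction $v_n$ through it is a line.

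The key step is to show that $v_n\to v_0$ in $C^0$ on compact sets (after possibly replacing $v_n$ by $-v_n$). For $g_0$ the line direction $v_0$ is the unit vector field $\partial_t$ along the $\mathbb{R}$-factor. To compare: fix $p\in M$; the $g_n$-geodesic $\gamma_n$ with $\gamma_n(0)=p$, $\gamma_n'(0)=v_n(p)$ is a line, hence minimizing; by Lemma~\ref{sec:souls-ricci-souls-1} the geodesic flows converge in $C^1$ on compacta, and a standard Arzel\`a--Ascoli/limit-of-minimizers argument shows any subsequential limit of $v_n(p)$ is the initial direction of a $g_0$-line through $p$, which by Lemma~\ref{sec:souls-ricci-souls} (and the remark) must be $\pm\partial_t$. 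Uniqueness of the limit up to sign, together with a connectedness/continuity argument to fix the sign globally, gives $v_n\to v_0$ uniformly on compact sets. The remark after Lemma~\ref{sec:souls-ricci-souls} is what guarantees there is no other line direction competing. I expect this identification of the splitting direction, and especially making the convergence uniform (not just pointwise) and controlling the sign, to be the main obstacle.

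Once $v_n\to\partial_t$ in $C^1_{\mathrm{loc}}$ (the $C^1$ because geodesics are determined by $C^1$ data and the metrics converge in $C^r$, $r>1$), I would build the diffeomorphisms as follows. The flow $\Phi_n$ of $v_n$ gives a product structure $M\cong (N_n\times\{0\})\times\mathbb{R}$, with $N_n\times\{0\}=\{p: \Phi_n(p,s)$ is a $g_n$-line$\}$; this is a smooth closed hypersurface, $C^1$-close to $N_0\times\{0\}$ for large $n$. Define $\phi_n:N_0\to N_n$ as the composition: include $N_0=N_0\times\{0\}\hookrightarrow M$, then flow along $v_n$ until hitting $N_n\times\{0\}$ (i.e. project along the $v_n$-lines onto the $n$-th soul slice). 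Since $v_n$ is $C^1$-close to $\partial_t$ and $N_n\times\{0\}$ is a graph over $N_0\times\{0\}$ for large $n$, the hitting time is a smooth function and $\phi_n$ is a diffeomorphism $C^1$-close to the identity. Finally, by construction $\psi_n\circ(\text{inclusion of }N_n\times\{0\})$ identifies $(N_n,h_n)$ with the induced metric on the soul slice, and $\phi_n$ pulls this back to a metric on $N_0$; since $g_n\to g_0$ in $C^r$, the soul slices and the maps $\phi_n$ converge in $C^r$ to the identity, and hence $\phi_n^*h_n\to h_0$ in $C^r$. The only delicate point in this last part is that the slices $N_n\times\{0\}$, defined by the global condition "the $v_n$-orbit is a line", vary in $C^r$ with $n$; this follows because being a line is, for these split manifolds, equivalent to the first-return/closest-point structure transverse to $v_n$ being non-degenerate, which is stable under $C^r$-perturbation given the established $C^1_{\mathrm{loc}}$ convergence $v_n\to\partial_t$.
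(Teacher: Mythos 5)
Your proposal follows essentially the same route as the paper: identify the line-direction field ($v_n$, the paper's $X_n$), prove its convergence via the $C^1$-convergence of geodesic flows (Lemma~\ref{sec:souls-ricci-souls-1}), a subsequence argument, and the uniqueness of end-to-end lines (Lemma~\ref{sec:souls-ricci-souls}); then define $\phi_n$ by projecting $N_0\times\{0\}$ along the $g_n$-lines onto the slice $N_n\times\{0\}$ (which is exactly the paper's $\psi_n'\circ\psi_0^{-1}\circ\iota_0$) and conclude it is a diffeomorphism. Your handling of the sign ambiguity by fixing orientations/ends and of the diffeomorphism property (graph plus closeness to the identity, versus the paper's immersion--covering--$\pi_1$ argument) are cosmetic variants of the same argument.

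The one place where your argument has a real gap is the final step, the convergence of $\phi_n^*h_n$ in $C^r$ for $r>1$, i.e.\ including derivatives. You assert that $v_n\to\partial_t$ in $C^1_{\mathrm{loc}}$ ``because geodesics are determined by $C^1$ data,'' but Lemma~\ref{sec:souls-ricci-souls-1} and your limit-of-lines argument only give $C^0$ convergence of $v_n$ as a function of the base point; nothing you wrote controls derivatives of $v_n$ in the base directions, and hence nothing controls derivatives of the hitting-time function or of $\phi_n^*h_n = g_n(\cdot,\cdot)-g_n(\cdot,v_n)g_n(\cdot,v_n)$. The missing ingredient is that $v_n$ is \emph{parallel} with respect to $g_n$ (it is tangent to the Euclidean factor of the splitting), so $\nabla^n v_n=0$; this lets one rewrite every derivative of $g_n(\cdot,v_n)$ in terms of Christoffel symbols of $g_n$ and undifferentiated occurrences of $v_n$, and then the $C^{r-1}$ convergence of the Christoffel symbols together with the already established $C^0$ convergence of $v_n$ yields the $C^r$ convergence of the pulled-back metrics. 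This is exactly the computation the paper carries out at the end of its proof, and without it (or an equivalent bootstrapping) your last paragraph is an assertion of the conclusion rather than a proof.
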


\begin{proof}
  Notice that \(M\) has two ends and that we can assume that under \(\psi_n\) the first end is mapped to \(-\infty\) and the second end to \(+\infty\).

  For \(x\in M\) let \(\gamma_{x,n}:\mathbb{R}\rightarrow (M,g_n)\) be the unique line from the first end to the second end passing through \(\gamma_{x,n}(0)=x\).

By assumption \(g_n\) converges in the \(C^2\)- topology to \(g_0\). Moreover,  the set \(\{v\in T_xM;\; \frac{1}{2}\leq g_0(v,v)\leq 2\}\) is compact and contains, for sufficiently large \(n\), the unit sphere in \(T_xM\) with respect to the metric \(g_n\). Therefore, by Lemma~\ref{sec:souls-ricci-souls-1}, for every subsequence \(\gamma_{x,n_l}\), there is a subsequence \(\gamma_{x,n_{l_j}}\) such that the \(\gamma_{x,n_{l_j}}\) converge in the \(C^1\)- topology of uniform convergence
 on compact subsets of \(\mathbb{R}\) to a geodesic in \((M,g_0)\). This geodesic stretches from the first end to the second end and passes through \(x\). We claim that this geodesic \(\gamma\)  is a line in \((M,g_0)\) and hence is equal to \(\gamma_{x,0}\).
Indeed, if \(t_1,t_2\in \R\), \(t_1<t_2\), then we have for all \(n\):
\begin{equation*}
  d_n(\gamma_{x,n_{l_j}}(t_1),\gamma_{x,n_{l_j}}(t_2))=t_2-t_1.
\end{equation*}
Since \(g_n\) converges uniformly on compact subsets to \(g_0\) and \(\gamma_{x,n_{l_j}}\) converges uniformly on compact sets to \(\gamma\), we have that the left hand side of the above equation converges to \(d_0(\gamma(t_1),\gamma(t_2))\).
Because the right hand side is constant, it follows that \(\gamma\) is a line and hence equal to \(\gamma_{x,0}\).

The above argument holds for \emph{all} subsequences \(\gamma_{x,n_l}\). Therefore we have that  \(\gamma_{x,n}\rightarrow \gamma_{x,0}\) in \(C^1\)- topology uniformly on compact subsets.
Note, moreover, that the lines \(\gamma_{x,n}\) depend continuously on \(x\in M\).
Furthermore, the above convergence is also uniform on compact subsets of \(M\).

Now let \(X_n\) be the vector field on \(M\) given by \(X_n(x):=\frac{d\gamma_{x,n}}{dt}(0)\). Since the integral curve of \(X_n\) passing through \(x\) is given by \(\gamma_{x,n}\), \(X_n\) then converges in \(C^0\)-topology to \(X_0\) uniformly on compact sets.

Let us now identify each \(N_n\) with the image under the inclusion map

\begin{equation*}
N_n=N_{n}\times \{0\}\hookrightarrow N_n\times \mathbb{R}\rightarrow_{\psi_n^{-1}}M.  
\end{equation*}

This identifies \(T_pN_n\) with \( \langle X_n(\psi_n^{-1}(p,0))\rangle^{\perp_{g_n}} \subset T_{\psi_n^{-1}(p,0)} M\).

Moreover, the differential of the projection
\begin{equation*}
  \psi_n':M\rightarrow_{\psi_n}N_n\times \mathbb{R}\rightarrow N_n
\end{equation*}
is then just the orthogonal projection
\[d\psi_n'(v)=v-g_n(v,X_n)X_n.\]

This means that, for sufficiently large \(n\), \(\phi_n=\psi_n'\circ \psi_0^{-1}\circ \iota_0\) is an immersion and therefore a covering.
Here \(\iota_0:N_0\times\{0\}\hookrightarrow N_0\times \mathbb{R}\) denotes the inclusion.

Since \(\phi_n\) induces an isomorphism on fundamental groups, it follows that \(\phi_n\) is a diffeomorphism.

Therefore if \(X,Y,Z\) are vector fields on \(N_0\), we can compute
\begin{align*}
  X(\phi_n^*h_n(Y,Z))&=Xg_n(Y,Z)-X(g_n(Y,X_n)g_n(Z,X_n))\\
  &= Xg_n(Y,Z)-(g_n(\nabla^n_X Y,X_n)+g_n(Y,\nabla^n_X X_n))g_n(Z,X_n)\\ &-(g_n(\nabla^n_X Z,X_n)+g_n(Z,\nabla^n_X X_n))g_n(Y,X_n)\\
 &=Xg_n(Y,Z)-g_n(\nabla^n_X Y,X_n)g_n(Z,X_n)-g_n(\nabla^n_X Z,X_n)g_n(Y,X_n)\\
&\rightarrow Xg_0(Y,Z)-g_0(\nabla^0_X Y,X_0)g_0(Z,X_0)-g_0(\nabla^0_X Z,X_0)g_0(Y,X_0)\\ &=Xg_0(Y,Z)=Xh_0(Y,Z) \text{ as } n\rightarrow \infty.
\end{align*}
Here \(\nabla^n\) denotes the Riemannian connection on \((M,g_n)\).
Note here that under the isometry \((M,g_n)\cong (N_n\times \R,h_n+ dt^2)\), the vector field \(X_n\) corresponds to the constant length vector field \(\partial/\partial t\) on \(N_n\times \R\) tangent to the second factor.
Hence, \(\nabla^n X_n=0\) follows.
Notice, moreover, that the above convergence is uniform on \(N_0\).

 We can deal with higher derivatives of the metrics in a similar way, and hence the claim follows.
\end{proof}

\

\begin{remark}
  In Theorem~\ref{sec:metrics-mtimes-t} below we will generalize Theorem~\ref{sec:metrics-mtimes-r} and its proof to the case of products of \(M\) with \(\mathbb{R}^k\), \(k>1\).
  However, treating the case \(k=1\) separately allows for keeping the notation in both the proofs simpler. Moreover, we think it might be also helpful for the reader to see the arguments in the simplest case first.
\end{remark}

\begin{cor}
\label{sec:metr-prod-with}
  Let \(M\) be a closed manifold. Then for \(r> 1\) there is a homeomorphism
  \begin{equation*}
    {\mathcal{M}}_{\Ric\geq 0}^{r}(M\times \mathbb{R},M)\rightarrow \mathcal{M}_{\Ric\geq 0}^{r}(M).
  \end{equation*}
\end{cor}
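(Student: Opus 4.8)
The plan is to construct the homeomorphism by relating metrics on $M \times \mathbb{R}$ that split as products to their Ricci souls, and then to exhibit explicit continuous inverse maps. First I would observe that any $g \in \mathcal{R}_{\Ric \geq 0}^r(M \times \mathbb{R}, M)$ is by definition isometric to $h + dt^2$ for some complete $h \in \mathcal{R}^r(M)$ with $\Ric \geq 0$; since $M$ is closed, $(M \times \mathbb{R}, g)$ contains a line, so by the Cheeger–Gromoll splitting theorem every such $g$ is in fact isometric to $(N, h') \times \mathbb{R}$ with $N$ closed, and here $N = M$ up to diffeomorphism. The assignment $g \mapsto h$ descends to a map $\Phi\colon \mathcal{M}_{\Ric \geq 0}^r(M \times \mathbb{R}, M) \to \mathcal{M}_{\Ric \geq 0}^r(M)$: if $g_1, g_2$ differ by a diffeomorphism of $M \times \mathbb{R}$, then by Lemma~\ref{sec:souls-ricci-souls} (applied to identify the lines) the factors $h_1, h_2$ differ by a diffeomorphism of $M$, so the class $[h]$ is well defined. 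In the other direction, the obvious map $\Psi\colon \mathcal{M}_{\Ric \geq 0}^r(M) \to \mathcal{M}_{\Ric \geq 0}^r(M \times \mathbb{R}, M)$, $[h] \mapsto [h + dt^2]$, is clearly well defined and a set-theoretic inverse of $\Phi$.

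The substantive content is continuity of $\Phi$, and this is exactly where Theorem~\ref{sec:metrics-mtimes-r} enters. Suppose $[g_n] \to [g_0]$ in $\mathcal{M}_{\Ric \geq 0}^r(M \times \mathbb{R}, M)$; lift to representatives $g_n \to g_0$ in $\mathcal{R}_{\Ric \geq 0}^r(M \times \mathbb{R}, M)$ converging in $C^r$ on compact subsets (this is possible because the quotient map from a convex space of metrics is open on the relevant subsets, so convergence in the moduli space lifts after passing to a subsequence and applying diffeomorphisms). After a further diffeomorphism we may take $g_0 = h_0 + dt^2$ in genuine product form. Now Theorem~\ref{sec:metrics-mtimes-r} furnishes, for large $n$, diffeomorphisms $\phi_n\colon N_0 = M \to N_n$ with $\phi_n^* h_n \to h_0$ in the $C^r$-topology, which is precisely the statement that $[h_n] \to [h_0]$ in $\mathcal{M}_{\Ric \geq 0}^r(M)$. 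A standard subsequence argument then upgrades this to continuity of $\Phi$ without passing to subsequences: if $[h_n] \not\to [h_0]$, some subsequence stays outside a neighbourhood, but the above shows a sub-subsequence converges, a contradiction. Continuity of $\Psi$ is immediate, since $h \mapsto h + dt^2$ is continuous $\mathcal{R}^r(M) \to \mathcal{R}^r(M \times \mathbb{R})$ and descends.

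The one point demanding care, and the main obstacle, is the lifting step: convergence $[g_n] \to [g_0]$ in the moduli space only guarantees representatives $\tilde g_n$ converging to \emph{some} representative $\tilde g_0$ of $[g_0]$, and a priori $\tilde g_0$ need not be the chosen product metric $h_0 + dt^2$ — one must apply an ambient diffeomorphism of $M \times \mathbb{R}$ to bring $\tilde g_0$ into product form, and then check that the hypotheses of Theorem~\ref{sec:metrics-mtimes-r} (namely that each $\tilde g_n$ remains isometric to a product $(N_n, h_n) \times \mathbb{R}$ with $N_n$ closed) are preserved. The latter follows because being isometric to such a product is a property of the isometry class, i.e. it is $\Diff(M \times \mathbb{R})$-invariant, so it holds for $\tilde g_n$ iff it holds for $g_n$; and $g_n \in \mathcal{R}_{\Ric \geq 0}^r(M \times \mathbb{R}, M)$ by hypothesis, so by the splitting theorem (using that $M$, hence $M \times \mathbb{R}$, has compact "cross-section" forcing a line) it is isometric to $(N_n, h_n) \times \mathbb{R}$ with $N_n$ closed. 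Assembling these observations, $\Phi$ and $\Psi$ are mutually inverse continuous bijections, hence homeomorphisms, which is the assertion of Corollary~\ref{sec:metr-prod-with}.
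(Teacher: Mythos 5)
Your proposal is correct and follows essentially the same route as the paper: identify the lines via Lemma~\ref{sec:souls-ricci-souls}, send a metric to (the class of) its restriction to the orthogonal complement of the line field, invert by $[h]\mapsto[h+dt^2]$, and invoke Theorem~\ref{sec:metrics-mtimes-r} for continuity. The only difference is in the descent step: rather than lifting convergent sequences from the moduli space (the point you flag as delicate), the paper simply constructs a continuous $\Diff(M\times\R)$-invariant map $\mathcal{R}^r_{\Ric\geq 0}(M\times\R,M)\to\mathcal{M}^r_{\Ric\geq 0}(M)$ and lets the universal property of the quotient topology do the rest, which avoids the lifting argument altogether.
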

\begin{proof}
  The inverse map of the above map is induced by the map which assigns to a metric on \(M\) its metrical product with a line.

Therefore it suffices to construct a continuous \(\Diff(M\times \R)\)-invariant map
 \begin{equation*}
    {\mathcal{R}}_{\Ric\geq 0}^r(M\times \mathbb{R},M)\rightarrow \mathcal{M}_{\Ric\geq 0}^r(M).
  \end{equation*}
We define this map to be the map which assigns to a metric on \(M\times \R\) of non-negative Ricci-curvature
 the restriction of that metric to an integral submanifold \(M_{\mathcal{D}}\)  of the distribution \(\mathcal{D}\)  on \(M\times \R\) which is orthogonal to the lines with respect to this metric.

 In a Riemannian product \((M,h)\times \mathbb{R}\), where \(M\) is closed, all lines are of the form \(\gamma_x(t)=(x,t)\) for \(x\in M\). Hence the integral submanifold \(M_{\mathcal{D}}\) exists and is diffeomorphic to \(M\).
 Moreover, the metric on the integral submanifold \(M_{\mathcal{D}}\)  is non-negatively Ricci-/sectional curved if and only if the metric on the product is curved in this way.

Furthermore,
by Theorem \ref{sec:metrics-mtimes-r} above this assignment is continuous.
Hence the claim follows.
\end{proof}

\

\begin{remark}
  In the special case of non-negative \emph{sectional} curvature, the proof of Corollary~\ref{sec:metr-prod-with} also yields a homeomorphism
    \begin{equation*}
    {\mathcal{M}}_{\sec\geq 0}^{r}(M\times \mathbb{R},M)\rightarrow \mathcal{M}_{\sec\geq 0}^{r}(M).
  \end{equation*}
  This homeomorphism is given by sending a metric \(g+dt^2\) on \(M\times \mathbb{R}\) to its soul \((M,g)\).
\end{remark}

\

\begin{remark}
 In the case \(2\leq r<\infty\) our argument above corrects  and fills 
  a gap in the proof of Proposition 2.8 of \cite{MR2863912}. There, based on the Splitting Theorem, it is claimed that 
  the map \(\phi\)  sending a metric \(g\) in \(\mathcal{M}_{\sec\geq 0}^{r_0}(M_i)\) to its product \(g+dt^2\) with a line gives a homeomorphism
\[\coprod_i\mathcal{M}_{\sec\geq 0}^{r_0}(M_i)\rightarrow \mathcal{M}_{\sec\geq 0}^r(M\times \R),\]
where \(r_0=r\) if \(r=0\) or \(r=\infty\) and \(r_0=r-1\) otherwise. Here the union is taken over all manifolds \(M_i\) such that \(M_i\times \mathbb{R}\) and \(M\times \mathbb{R}\) are diffeomorphic.
The map \(\phi\) can be understood as being the inverse of sending a metric on \(M\times\mathbb{R}\) to its soul \((M,g)\).
But \(\phi\) is discontinuous unless \(r=0\) or \(r=\infty\). (See also \cite{belegradek18:_modul}.)
\end{remark}

\section{Metrics on products with the circle $S^1$}
\label{sec:metrics-mtimes-s1-3}

In this section we consider connectedness properties of 
the space and moduli space of non-negative Ricci-curved metrics on \(M\times S^1\),
where \(M\) is a simply connected closed manifold.
The following lemma will turn out to be the key structural result in this study.

\begin{lemma}
\label{sec:metrics-mtimes-s1}
  Let \(M\) be a closed connected Riemannian manifold with torsion free fundamental group
  \(\pi_1(M)\) and non-negative Ricci curvature. Then there is a simply connected closed Riemannian manifold \((N,h)\) with non-negative Ricci curvature and a closed flat manifold \(F\) with fundamental group \(\pi_1(F)\cong \pi_1(M)\) such that \(M\) is isometric to an \((N,h)\)-bundle with structure group \(\pi_1(F)\) associated to the universal covering \(\R^n\rightarrow F\). Here \(\pi_1(F)\) acts via a homomorphism \(\varphi:\pi_1(F)\rightarrow \Iso(N,h)\) on \(N\).
\end{lemma}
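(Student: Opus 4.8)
The plan is to combine the Cheeger--Gromoll Splitting Theorem with Bieberbach's structure theory for flat manifolds. First I would pass to the Riemannian universal covering $(\widetilde M,\widetilde g)$. Since $\pi_1(M)$ is infinite (unless $M$ is simply connected, in which case the statement is trivial with $F$ a point), torsion-free and acts cocompactly by isometries, a standard argument produces a line in $\widetilde M$: take a sequence of minimizing geodesic segments between points $p$ and $\sigma^k(p)$ for a suitable infinite-order $\sigma\in\pi_1(M)$ and extract a limit. By the Splitting Theorem, $(\widetilde M,\widetilde g)$ is isometric to $(N,h)\times\R^n$ where $(N,h)$ is a complete non-negatively Ricci-curved manifold containing no line; because the action is cocompact, $N$ is compact, and since $\widetilde M$ is simply connected, so is $N$. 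This $(N,h)$ is the Ricci soul of $(M,g)$ in the terminology introduced earlier.

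Next I would analyze the deck group action. Since the splitting $\widetilde M=N\times\R^n$ is canonical (the $\R^n$ factor is the set of directions of lines through a point, or equivalently the Euclidean de Rham factor), every isometry of $\widetilde M$ preserves it, giving a homomorphism $\Iso(\widetilde M)\to\Iso(N,h)\times\Iso(\R^n)=\Iso(N,h)\times(O(n)\ltimes\R^n)$. Restricting to $\pi_1(M)$ and composing with projection to the second factor yields a homomorphism $\rho:\pi_1(M)\to\Iso(\R^n)$. The key point is that $\rho$ has trivial kernel: an element acting trivially on $\R^n$ and by an isometry of the compact $N$ would have a fixed point set (nonempty since $N$ is compact and the element has finite order on... ) — more cleanly, the image of $\pi_1(M)$ in $\Iso(\widetilde M)$ is discrete and cocompact, hence so is its image $\Gamma:=\rho(\pi_1(M))$ in $\Iso(\R^n)$ once one checks the projection is proper, which follows from compactness of $N$ (any element acting trivially on $\R^n$ lies in $\Iso(N,h)$, a compact group, so a discrete subgroup meeting it is finite, hence trivial by torsion-freeness). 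Thus $\rho$ is injective and $\Gamma$ is a torsion-free crystallographic group, so by Bieberbach $F:=\R^n/\Gamma$ is a closed flat manifold with $\pi_1(F)\cong\pi_1(M)$.

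Finally I would assemble the bundle description. Write $\varphi:\pi_1(F)\cong\pi_1(M)\to\Iso(N,h)$ for the first-factor projection of the action, and observe that $\pi_1(M)$ acts on $N\times\R^n$ diagonally via $(\varphi,\rho)$. Then
\[
M=(N\times\R^n)/\pi_1(M)\cong (N\times\widetilde F)/\pi_1(F)=N\times_{\varphi}\widetilde F,
\]
where $\widetilde F=\R^n\to F$ is the universal covering on which $\pi_1(F)$ acts by deck transformations; this is precisely the fibre bundle with fibre $(N,h)$, structure group $\pi_1(F)$ acting through $\varphi$, associated to $\R^n\to F$. The product metric $h+(\text{flat})$ on $N\times\R^n$ is $\pi_1(M)$-invariant and descends to the given metric on $M$, so the isometry type is as claimed.

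The main obstacle is the injectivity of $\rho$, equivalently showing the soul factor $N$ ``absorbs no deck transformation''. Once one knows the de~Rham / splitting decomposition is canonical (so $\pi_1(M)$ respects it) and that $\Iso(N,h)$ is compact (clear, $N$ closed), the torsion-freeness hypothesis on $\pi_1(M)$ does the rest: the kernel of $\rho$ is a discrete subgroup of the compact group $\Iso(N,h)$, hence finite, hence trivial. The remaining verifications — compactness of $N$, discreteness and cocompactness of $\Gamma$ in $\Iso(\R^n)$, and the identification of the quotient with the associated bundle — are routine, and Bieberbach's theorem is invoked as a black box.
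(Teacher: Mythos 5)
Your proposal is correct and follows essentially the same route as the paper: apply the Cheeger--Gromoll Splitting Theorem to the universal covering to get $(N,h)\times\R^n$ with $N$ closed, use torsion-freeness of $\pi_1(M)$ together with compactness of $\Iso(N,h)$ to see that the projection $\pi_1(M)\to\Iso(\R^n)$ is injective with Bieberbach image acting freely, set $F=\R^n/\pi_1(M)$, and let $\varphi$ be the projection to $\Iso(N,h)$. Your write-up merely supplies more of the routine verifications (existence of a line, discreteness and cocompactness of the image, identification of the quotient with the associated bundle) that the paper leaves implicit.
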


\begin{proof}
  By the Splitting Theorem of Cheeger and Gromoll \cite{MR0303460} the universal covering of \(M\) is isometric to \((N,h)\times \R^n\) and \(\pi_1(M)\) is a discrete subgroup of \(\Iso((N,h)\times \R^n)\cong \Iso(N,h)\times \Iso(\R^n)\).
  Since \(\pi_1(M)\) is torsion free and \(\Iso(N,h)\) compact, \(\pi_1(M)\cap \Iso(N,h)=\{1\}\) and the projection \(\pi_1(M)\rightarrow \Iso(\R^n)\) is injective. Moreover, also because \(\pi_1(M)\) is torsion free, the image of this map is a Bieberbach group and therefore acts freely and isometrically on \(\R^n\).
  We let \(F\) denote the quotient \(\R^n/\pi_1(M)\).
  The lemma follows then by letting \(\varphi\) be the projection from \(\pi_1(M)\) to \(\Iso(N,h)\).
\end{proof}

\

\begin{remark}
  The torsion-freeness of \(\pi_1(M)\) is necessary to guarantee that \(F\) is a manifold. 
  If this is not the case, one might write \(M\) as an orbifold-bundle over a flat orbifold \(\mathcal{F}\) with fiber a manifold \(N\).
  But in this case the fundamental groups of \(\mathcal{F}\) and \(N\) as well as the structure group might not be the same as in Lemma~\ref{sec:metrics-mtimes-s1}. (Compare here also \cite{belegradek04:_nonneg})
\end{remark}

\

\begin{remark}
  By the Bieberbach theorems, the manifold \(F\) in the above lemma is determined up to 
  affine diffeomorphism by \(\pi_1(M)\).
\end{remark}

\

\begin{theorem}
\label{sec:metrics-mtimes-s1-1}
  Let \((M,g_0)\) be a closed connected Riemannian manifold with non-negative Ricci curvature 
  and fundamental group  \(\pi_1(M)\cong\mathbb{Z}\). Moreover, let \(g_n\) be a sequence of metrics of non-negative Ricci curvature on \(M\) which converges in the \(C^\infty\)- topology to the metric \(g_0\). Then there is a sequence of simply connected Riemannian manifolds \((N_n,h_n)\) and \(a_n>0\) for \(n\geq 0\) such that for sufficiently large \(n\) the following holds:
  \begin{itemize}
  \item there are isometries \(\psi_n:(M,g_n)\rightarrow (N_n,h_n)\times_{a_n\mathbb{Z}}\mathbb{R}\) as in the above lemma, which induce the identity on \(\mathbb{Z}=\pi_1(M)\).
  \item there are diffeomorphisms \(\phi_n: N_0 \rightarrow N_n\), such that
  \item the \(\phi_n^*h_n\) converge in the \(C^\infty\)- topology to \(h_0\).
  \item the isometry groups \(\Iso(N_0,\phi_n^*h_n)\) are, for  \(n\) 
  sufficiently large, conjugate in \(\Diff(N_0)\) to subgroups of \(\Iso(N_0,h_0)\) .
  \item the homomorphisms \(\varphi_n:\mathbb{Z}\cong a_n\mathbb{Z}\rightarrow \Iso(N_0,\phi_n^*h_n)\subset \Diff(N_0)\) converge in \(\Diff(N_0)/\Iso(N_0,h_0)\) to \(\varphi_0\). Here \(\Iso(N_0,h_0)\) acts on \(\Diff(N_0)\) by conjugation.
  \end{itemize}
\end{theorem}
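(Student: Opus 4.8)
The plan is to lift the problem to the universal covering of \(M\), reduce the first half of the statement to Theorem~\ref{sec:metrics-mtimes-r}, and then add two ingredients concerning stability under \(C^\infty\)-convergence. First, by Lemma~\ref{sec:metrics-mtimes-s1} applied to each \((M,g_n)\) — using that \(\pi_1(M)\cong\mathbb Z\) is torsion free and that by the Bieberbach theorems the only closed flat manifold with fundamental group \(\mathbb Z\) is a circle \(\R/a_n\mathbb Z\) — there are closed simply connected non-negatively Ricci curved manifolds \((N_n,h_n)\), numbers \(a_n>0\), and isometries \(\psi_n\colon(M,g_n)\to(N_n,h_n)\times_{a_n\mathbb Z}\R\) of the type described in that lemma; after normalising the splittings and declaring a distinguished generator of \(\pi_1(M)\) we may arrange that \(a_n>0\) and that each \(\psi_n\) induces the identity on \(\mathbb Z=\pi_1(M)\). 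Lifting to the universal covering \(p\colon\widehat M\to M\) with a fixed generator \(\tau\in\Diff(\widehat M)\) of the deck group, the \(\psi_n\) lift to isometries \(\Psi_n\colon(\widehat M,\tilde g_n)\to(N_n,h_n)\times\R\), \(\tilde g_n:=p^*g_n\), intertwining \(\tau\) with \((x,t)\mapsto(\beta_n(x),t+a_n)\) for some \(\beta_n\in\Iso(N_n,h_n)\); moreover \(\tilde g_n\to\tilde g_0\) in the \(C^\infty\)-topology of uniform convergence on compact subsets of \(\widehat M\) (since \(M\) is compact and \(p\) is a local isometry), and the two ends of \(\widehat M\) are consistently labelled by the \(\Psi_n\) after, if necessary, composing with the reflection \((x,t)\mapsto(x,-t)\), which leaves \(\beta_n\) and all conjugacy classes below unchanged. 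Applying Theorem~\ref{sec:metrics-mtimes-r} to \(\widehat M\), \(\tilde g_0\), \(\tilde g_n\) and the \(\Psi_n\) (for each finite \(r>1\); the diffeomorphisms it produces do not depend on \(r\)) yields, for sufficiently large \(n\), diffeomorphisms \(\phi_n\colon N_0\to N_n\) with \(\phi_n^*h_n\to h_0\) in \(C^\infty\). This settles the first three bullet points.

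For the fourth bullet point, put \(\hat h_n:=\phi_n^*h_n\), so \(\hat h_n\to h_0\) in \(C^\infty\) on the closed manifold \(N_0\), and write \(G:=\Iso(N_0,h_0)\). Since \(\hat h_n\to h_0\) in \(C^\infty\), the \(\hat h_n\)-isometries obey uniform \(C^\infty\)-bounds (elliptic regularity for the isometry equation), so all the compact Lie groups \(\Iso(N_0,\hat h_n)\) lie in a single \(C^\infty\)-compact subset of \(\Diff(N_0)\); moreover any \(C^1\)-subsequential limit of a sequence \(f_n\in\Iso(N_0,\hat h_n)\) preserves \(h_0\), because \(f_n^*\hat h_n=\hat h_n\to h_0\). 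To promote this to an actual conjugacy I would invoke a slice theorem for the action of \(\Diff(N_0)\) on the space of metrics (Ebin): for \(n\) large there is \(\eta_n\in\Diff(N_0)\) with \(\eta_n\to\Id\) such that \(\eta_n^*\hat h_n\) lies in the \(G\)-invariant slice through \(h_0\); as the isotropy group of any metric in that slice is contained in \(G\), we get \(\eta_n^{-1}\,\Iso(N_0,\hat h_n)\,\eta_n=\Iso(N_0,\eta_n^*\hat h_n)\subseteq G\), which is the fourth bullet point, with conjugacies close to \(\Id\). In particular the groups \(\Iso(N_0,\hat h_n)\) are \(C^\infty\)-precompact in \(\Diff(N_0)\).

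It remains to prove the convergence of the monodromy, which I expect to be the main obstacle. Transport the monodromies to \(N_0\) by setting \(\varphi_n\colon a_n\mathbb Z\to\Iso(N_0,\hat h_n)\subset\Diff(N_0)\), \(\varphi_n(a_n):=\phi_n^{-1}\beta_n\phi_n\), and \(\varphi_0(a_0):=\beta_0\). Recall from the construction of \(\phi_n\) in the proof of Theorem~\ref{sec:metrics-mtimes-r} that, once \(N_0\) and \(N_n\) are identified with the zero sections of \(\widehat M\cong N_0\times\R\) and \(\widehat M\cong N_n\times\R\), the map \(\phi_n\) sends \(x\in N_0\) to the point where the \(\tilde g_n\)-line through \(x\) meets the section \(N_n\). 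Using in addition that in \((N_n,h_n)\times\R\) all lines are vertical (Lemma~\ref{sec:souls-ricci-souls}), a short computation identifies \(\varphi_n(a_n)(x)\) with the point where the \(\tau\)-image of the \(\tilde g_n\)-line through \(x\) meets the section \(N_0\), and \(\varphi_0(a_0)(x)\) with the corresponding point for \(\tilde g_0\). Since the line-direction vector fields satisfy \(X_n\to X_0\) in \(C^0\) uniformly on compact subsets of \(\widehat M\) (Lemma~\ref{sec:souls-ricci-souls-1} and the argument in the proof of Theorem~\ref{sec:metrics-mtimes-r}), the \(\tilde g_n\)-lines converge in \(C^1\) to the \(\tilde g_0\)-lines uniformly on compacta; as those lines meet the section \(N_0\) transversally, this gives \(\varphi_n(a_n)\to\varphi_0(a_0)\) in \(C^0\) on \(N_0\). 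The \(C^\infty\)-precompactness from the previous step, together with the fact that every subsequential limit of the \(\varphi_n(a_n)\) preserves \(h_0\), then upgrades this to \(C^\infty\)-convergence, and a fortiori to convergence of the \(\varphi_n\) in \(\Diff(N_0)/\Iso(N_0,h_0)\) — the passage to the quotient being the natural formulation since \(\varphi_0\) is itself only defined up to conjugation by \(\Iso(N_0,h_0)\), reflecting the choices of the reference section and of the splitting \(\Psi_0\). I expect this last step to be the main difficulty: one must make the ``section/line'' description of \(\varphi_n(a_n)\) and \(\varphi_0(a_0)\) precise, check that the convergence of lines is uniform over the compact \(N_0\), verify that the various ``for sufficiently large \(n\)'' thresholds are mutually consistent, and book-keep the non-canonical choices so that the normalisations used for the \(g_n\) and for \(g_0\) are compatible.
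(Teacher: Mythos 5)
Your treatment of the first four bullet points coincides with the paper's: the splitting via Lemma~\ref{sec:metrics-mtimes-s1} (with \(F=\R/a_n\mathbb Z\)), Theorem~\ref{sec:metrics-mtimes-r} applied to the pulled-back metrics on the universal covering for the existence of \(\phi_n\) and the \(C^\infty\)-convergence \(\phi_n^*h_n\to h_0\), and Ebin's slice theorem for the conjugacy of isometry groups. For the last bullet, however, you take a genuinely different route. The paper writes the deck transformation as \(\tau=(\varphi_n(1),l_n)\) with \(l_n\) the time-\(a_n\) flow of the parallel field \(X_n\), extracts convergent subsequences of the isometric embeddings \(\tilde\psi_n^{-1}\circ(\phi_n\times\{0\})\) by Arzel\`a--Ascoli, and is then forced to prove \(a_n\to a_0\); this it does by a separate Gromov--Hausdorff argument (collapsing the fibers of \(N_n\to M\to \R/a_n\mathbb Z\) and interchanging the limits in \(m\) and \(n\)). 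You instead observe that \(\varphi_n(a_n)(x)=\phi_n^{-1}\beta_n\phi_n(x)\) is exactly the point where the \(\tau\)-image of the \(\tilde g_n\)-line through \(x\) meets the fixed section \(N_0\) (I checked this identification; it is correct, since \(\tau(\gamma_{x,n})=\gamma_{\tau(x),n}\) and \(\phi_n^{\pm1}\) are the projections along \(\tilde g_n\)-lines), so that \(C^0\)-convergence of the monodromies follows directly from the uniform \(C^1\)-convergence of lines established in the proof of Theorem~\ref{sec:metrics-mtimes-r}, transversality of the lines to \(N_0\), and continuity of the fixed map \(\tau\); precompactness of the isometries then upgrades this to \(C^\infty\). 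This is cleaner and bypasses the Gromov--Hausdorff step entirely; the trade-off is that the paper's argument additionally yields \(a_n\to a_0\), which the theorem does not assert but which is used downstream (the \(\R_{>0}\)-coordinate of the embedding \(\Phi\) in Corollary~\ref{sec:metrics-mtimes-s1-2}), so if you intend to use your proof as a drop-in replacement you should supply the short extra argument that \(a_n\to a_0\) (e.g.\ \(a_n\) is the \(\R\)-coordinate displacement of \(\tau\), recoverable from the convergent line data). The remaining points you flag yourself --- uniqueness of the intersection of each line with \(N_0\) for large \(n\), and uniformity over the compact \(N_0\) --- are genuine but routine, and the first is already implicit in \(\phi_n\) being a diffeomorphism.
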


\begin{proof}
The first claim follows from Lemma~\ref{sec:metrics-mtimes-s1} above.
The second and third claim follow from Theorem \ref{sec:metrics-mtimes-r} by considering the pullback metrics on the universal covering of \(M\).

The fourth claim then follows from the previous claims by Ebin's slice theorem \cite{MR0267604}, which shows that every metric \(g\)  on \(N_0\) has a neighborhood \(V\) in the space of metrics on \(N_0\) such that the isometry group of 
every \(g'\in V\) is conjugate to a subgroup of the isometry group of \(g\).

To see the last claim we argue as follows:
Let \(\tau \in \pi_1(M)\subset  \Iso((N_0,\phi^*_nh_n)\times \mathbb{R})\subset \Diff(N_0\times \R)\) be a generator of \(\pi_1(M)\).
Using the natural isomorphism \(\Iso((N_0,\phi^*_nh_n)\times \mathbb{R})\cong\Iso(N_0,\phi^*_nh_n)\times \Iso (\mathbb{R})\), we may write \(\tau=(\varphi_n(1),l_n)\).
Then \(l_n\) is the translation by \(\pm a_n\), or in other words \(l_n=f_{\pm a_n}\) where \(f_t\) is the flow of the vector field \(\frac{\partial}{\partial t}\) on \(\R\).

Therefore we have a commutative diagram of the form
\begin{equation*}
\xymatrix{
  (N_0,\phi_n^*h_n)\ar[d]_{\varphi_n(1)}\ar[r]&(N_0\times \R,\tilde{g}_n)\ar[d]^{\tau\circ l_n^{-1}}\\
  (N_0,\phi_n^*h_n)\ar[r]&(N_0\times \R,\tilde{g}_n)}
\end{equation*}

Here \(l_n\) is given by following the flow of the parallel norm-one vector field \(X_n\) which is tangent to the lines in \((N_0\times \R,\tilde{g}_n)\) for time \(a_n\) (compare the proof of \ref{sec:metrics-mtimes-r}). Moreover, 
\(\tau\) is given by the action of a generator of the fundamental group of \(M\) on the universal covering \(N_0\times \R\).

The horizontal maps are given by \(\tilde{\psi}^{-1}_n\circ (\phi_n\times\{0\})\), where the \(\tilde{\psi}_n\) are lifts of the maps \(\psi_n\) to universal coverings.
They are clearly isometric embeddings, and therefore every subsequence of these maps has a subsequence which converges  to an isometric embedding of \((N_0,h_0)\) in \((N_0\times \R,\tilde{g}_0)\) by the Arzela-Ascoli lemma.
This embedding restricts to an isometry of \((N_0,h_0)\).

Since the vector fields \(X_n\) converge to the field \(X_0\), it thus suffices to show that
the sequence of numbers  \(a_n\) converges to \(a_0\).

By shrinking the fibers of \(\frac{1}{m}N_n\rightarrow (M,g_n)\rightarrow F_n=\mathbb{R}/a_n\mathbb{Z}\), one obtains
 a sequence of manifolds which Gromov-Hausdorff converge to \(F_n\) for \(m\rightarrow \infty\).
But if we fix \(m\) and consider the limit \(n\rightarrow \infty\), these manifolds converge to \(\frac{1}{m}N_0\rightarrow (M,g_0)\rightarrow F_0=\mathbb{R}/a_0\mathbb{Z}\).
To see this, note that if \(Y,Z\) are tangent vectors to \(M\) then the shrunk metric \(g_{n,m}\) is given by
\begin{equation*}
  g_{n,m}(Y,Z)=\frac{1}{m}(g_n(Y,Z)-g_n(Y,X_n)g_n(Z,X_n)) + g_n(Y,X_n)g_n(Z,X_n).
\end{equation*}
Hence it follows that if \(g_n\) and \(g_0\) are \(\epsilon\) - close in the \(C^0\)-topology and if 
\(X_n\) is \(\epsilon\) - close to \(X_0\) for some \(\epsilon >0\),
 then \(g_{n,m}\) is \(C\epsilon\) - close to \(g_{0,m}\) for some constant \(C>0\) 
 which is independent of \(\epsilon\) and \(m\).

Thus, by interchanging the order of taking limits, it follows that \(F_n\) Gromov--Hausdorff converges to \(F_0\) as \(n\rightarrow \infty\). 
Whence \(a_n\) converges to \(a_0\) and the claim follows.
\end{proof}

\

\begin{cor}
  \label{sec:metr-prod-with-1}
Let \(M\) be a simply connected closed smooth manifold of dimension at least $5$
 which admits a Riemannian metric with non-negative Ricci curvature.
  Then there is a retraction
  \begin{equation*}
    \mathcal{M}_{Ric\geq 0}(M\times S^1)\rightarrow \mathcal{M}_{\Ric\geq 0}(M).
  \end{equation*} 
  In particular, 
  the moduli space \(\mathcal{M}_{\Ric\geq 0}(M\times S^1)\)  of Riemannian metrics 
  with non-negative Ricci curvature on $M\times S^1$   
 has at least as many path components as the moduli space  \(\mathcal{M}_{\Ric \geq 0}(M)\)
 of metrics with non-negative Ricci curvature on $M$.
\end{cor}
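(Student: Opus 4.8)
The plan is to construct the retraction $\mathcal{M}_{\Ric\geq 0}(M\times S^1)\rightarrow \mathcal{M}_{\Ric\geq 0}(M)$ directly, using Theorem~\ref{sec:metrics-mtimes-s1-1} as the key continuity input. First I would note that the inclusion $\mathcal{M}_{\Ric\geq 0}(M)\hookrightarrow \mathcal{M}_{\Ric\geq 0}(M\times S^1)$ is given by sending a metric $h$ on $M$ to the product metric $h+dt^2$ on $M\times S^1$ (taking $S^1$ of fixed length, say $2\pi$); this is clearly continuous and injective, so it remains only to produce a continuous left inverse, i.e.\ a retraction, and the "at least as many path components" statement then follows formally since a retract of a space has no more path components than the space itself.

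The retraction is defined as follows. Given a non-negatively Ricci curved metric $g$ on $M\times S^1$, its Riemannian universal cover is a complete non-negatively Ricci curved metric $\tilde g$ on $M\times\R$, which contains a line (since $M$ is compact, any lift of a generator of $\pi_1=\Z$ yields a line by the same diameter argument as in Lemma~\ref{sec:souls-ricci-souls}); by the Splitting Theorem it is isometric to $(N,h)\times\R$ with $N$ closed and simply connected (here I use that $M$ is simply connected of dimension $\geq 5$, so that $M\times\R$ is simply connected and, by the diffeomorphism classification forced by the splitting together with the $h$-cobordism/surgery arguments underlying Theorem~\ref{sec:metrics-mtimes-r}, the soul $N$ is diffeomorphic to $M$). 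Equivalently, following the intrinsic description in the proof of Theorem~\ref{sec:metrics-mtimes-r} and Corollary~\ref{sec:metr-prod-with}, one takes the distribution $\mathcal{D}$ on $M\times S^1$ orthogonal to the (unique, by the splitting) norm-one parallel line field $X$, observes $\mathcal{D}$ is integrable with compact integral leaf $M_{\mathcal D}\cong M$, and restricts $g$ to that leaf. This produces a well-defined element $[h]\in\mathcal{M}_{\Ric\geq 0}(M)$, manifestly independent of the choice of leaf and invariant under $\Diff(M\times S^1)$ because the whole construction is natural. On a product metric $h+dt^2$ it plainly recovers $[h]$, so the composite with the inclusion is the identity, as required for a retraction.

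The main obstacle is continuity of this map. This is exactly what Theorem~\ref{sec:metrics-mtimes-s1-1} is built to supply: given $g_n\to g_0$ in the $C^\infty$-topology on $M\times S^1$, one lifts to universal covers, applies Theorem~\ref{sec:metrics-mtimes-r} to get diffeomorphisms $\phi_n:N_0\to N_n$ with $\phi_n^*h_n\to h_0$ in $C^\infty$, and concludes that $[\phi_n^*h_n]=[h_n]\to[h_0]$ in $\mathcal{M}_{\Ric\geq 0}(M)$. The subtlety that Theorem~\ref{sec:metrics-mtimes-s1-1} addresses — namely that the circle factor may have varying length $a_n$ and the holonomy $\varphi_n$ may vary — does not actually affect the value of the retraction (we only extract the fiber metric $h_n$), but it is needed to know that one is in the situation of Theorem~\ref{sec:metrics-mtimes-r} at all, i.e.\ that for large $n$ the metrics $g_n$ genuinely split as fiber bundles with fiber diffeomorphic to $N_0$ over a circle, with fiber metrics converging. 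One further point to check is that the retraction is defined on \emph{all} of $\mathcal{M}_{\Ric\geq 0}(M\times S^1)$, not just near product metrics: here one uses that by Lemma~\ref{sec:metrics-mtimes-s1} \emph{every} non-negatively Ricci curved metric on $M\times S^1$ already has the fiber-bundle-over-a-circle structure (with simply connected fiber $N\cong M$), so the leaf-restriction construction is globally defined; the convergence statement is then a local matter handled by Theorem~\ref{sec:metrics-mtimes-s1-1}, and since $\mathcal{M}_{\Ric\geq 0}(M\times S^1)$ is locally path connected it suffices to verify sequential continuity.

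Finally, for the "path components" conclusion: the inclusion $\mathcal{M}_{\Ric\geq 0}(M)\hookrightarrow\mathcal{M}_{\Ric\geq 0}(M\times S^1)$ followed by the retraction is the identity, so the induced map $\pi_0(\mathcal{M}_{\Ric\geq 0}(M))\to\pi_0(\mathcal{M}_{\Ric\geq 0}(M\times S^1))$ is injective (a section of the retraction on $\pi_0$), which is precisely the assertion that $\mathcal{M}_{\Ric\geq 0}(M\times S^1)$ has at least as many path components as $\mathcal{M}_{\Ric\geq 0}(M)$. I would remark that the same argument, reading "$\sec$" for "$\Ric$" throughout and invoking the Soul Theorem instead of the Splitting Theorem, gives the analogous retraction $\mathcal{M}_{\sec\geq 0}(M\times S^1)\to\mathcal{M}_{\sec\geq 0}(M)$, which is the input needed for the non-negative sectional curvature versions of the corollaries in the introduction.
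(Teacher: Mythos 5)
Your proposal is correct and follows essentially the same route as the paper: define the map by lifting to the universal cover $M\times\mathbb{R}$ and taking the Ricci soul, use the dimension hypothesis and the $h$-cobordism theorem to identify the soul with $M$, invoke Theorem~\ref{sec:metrics-mtimes-s1-1} (via Theorem~\ref{sec:metrics-mtimes-r}) for continuity, and take the product with a circle of length $2\pi$ as the section. The additional remarks (the intrinsic leaf description, the sectional-curvature variant) are consistent with what the paper does elsewhere.
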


\begin{proof}
  The map
   \begin{equation*}
    \mathcal{M}_{Ric\geq 0}(M\times S^1)\rightarrow \mathcal{M}_{\Ric\geq 0}(M)
  \end{equation*} 
in the above claim is given by first pulling back a metric \(g\) on \(M\times S^1\) to the universal covering \(M\times \mathbb{R}\) and then sending it to its Ricci soul.

  To see that this yields a retraction, note first that by the dimension assumption and the \(h\)-cobordism theorem, every Ricci soul of a non-negatively Ricci curved metric on \(M\times \R\) is diffeomorphic to \(M\). Thus the map sending a metric on \(M\times S^1\) to its Ricci soul is well-defined and continuous by the above theorem.
  A section to this map is given by sending a metric on \(M\) to the product metric with the circle of length \(2\pi\).
  Therefore the claim follows.
\end{proof}

The following corollary allows a concrete description of the moduli space of non-negatively Ricci curved metrics on \(N_0\times S^1\) to be given in terms of the space of non-negatively Ricci curved metrics on \(N_0\).

\begin{cor}
\label{sec:metrics-mtimes-s1-2}
Let \(M\) be a closed connected non-negatively Ricci curved manifold with fundamental group \(\pi_1(M)\cong \mathbb{Z}\).
By Lemma~\ref{sec:metrics-mtimes-s1}, \(M\) is then a fiber bundle over \(S^1\) with closed simply connected fiber \(N_0\).

Let \(\mathcal{M}'_{\Ric\geq 0}(M)\) be the moduli space of non-negatively Ricci curved metrics whose Ricci soul is diffeomorphic to \(N_0\).
Then there is an embedding
  \begin{align*}
    \Phi:\mathcal{M}'_{\Ric \geq 0}(M)&\rightarrow \mathcal{R}_{\Ric\geq 0}(N_0)\times_{\Diff(N_0)}(\Diff(N_0)/\tau)\times \R_{>0},& [g]&\mapsto ([h,\varphi(1)],a),
  \end{align*}
where \(h\) and \(\varphi\) are as in Lemma \ref{sec:metrics-mtimes-s1}. Here 
\(\tau\) denotes the involution on \(\Diff(N_0)\) which sends a diffeomorphism to its inverse.
Moreover, \(\Diff(N_0)\) acts on on \(\Diff(N_0)/\tau\) by conjugation
and \(a\) is the length of the circle \(F\) as defined in Lemma~\ref{sec:metrics-mtimes-s1}.

This embedding identifies \(\mathcal{M}'_{\Ric \geq 0}(M)\) with its image
\[A=\{([g,\varphi],a);\; g\in \mathcal{R}_{\Ric \geq 0}(N_0), \varphi\in \Iso(g)\cap B, a>0\},\]
where \(B\) is the set of those diffeomorphisms of \(N_0\) whose mapping torus is diffeomorphic to \(M\).
\end{cor}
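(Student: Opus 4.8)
The plan is to read the map \(\Phi\) off from the Cheeger--Gromoll Splitting Theorem, check that it descends to a well-defined map on \(\mathcal{M}'_{\Ric\geq 0}(M)\) by tracking the choices involved, deduce its continuity from Theorem~\ref{sec:metrics-mtimes-s1-1}, and then build an explicit continuous inverse on \(A\); the description of the image will fall out along the way. To define \(\Phi([g])\), I would fix a representative \(g\), pull it back to the universal covering, and apply Lemma~\ref{sec:metrics-mtimes-s1} (with \(F=S^1\)) to get an isometry \(\tilde M\cong (N,h_N)\times\R\) with \(N\) closed and simply connected. Since the Ricci soul of \(g\) is assumed diffeomorphic to \(N_0\) we have \(N\cong N_0\); choosing such a diffeomorphism and pushing \(h_N\) forward gives an isometry \(\tilde M\cong(N_0,h)\times\R\). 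Because \(N_0\) is compact and simply connected it has no flat de Rham factor, so \(\Iso((N_0,h)\times\R)=\Iso(N_0,h)\times\Iso(\R)\); fixing a generator of \(\pi_1(M)=\mathbb{Z}\), its deck transformation is \((\varphi(1),\ell)\) with \(\varphi(1)\in\Iso(N_0,h)\), and since the action is free while \(\Iso(N_0,h)\) is compact, \(\ell\) must be a translation by some \(a\ne 0\), which I normalise to \(a>0\) by possibly inverting the generator. I then set \(\Phi([g]):=([h,\varphi(1)],a)\).

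For well-definedness I would run through the choices: replacing the diffeomorphism \(N\to N_0\) by its composition with \(\beta\in\Diff(N_0)\) changes \((h,\varphi(1))\) by the diagonal \(\Diff(N_0)\)-action and fixes \(a\); replacing the de Rham splitting isometry conjugates \(\varphi(1)\) by an element of \(\Iso(N_0,h)\) and at worst sends \(\ell\) to \(\ell^{-1}\), hence changes nothing in the balanced product and keeps \(a=|a|\); replacing the chosen generator of \(\pi_1(M)\) by its inverse sends \(\varphi(1)\) to \(\varphi(1)^{-1}\), i.e.\ acts by \(\tau\); and replacing \(g\) by \(\xi^*g\) for \(\xi\in\Diff(M)\) gives, after lifting \(\xi\) and composing with the splitting isometry, the same data up to the changes already accounted for. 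Continuity of \(\Phi\) is then immediate from Theorem~\ref{sec:metrics-mtimes-s1-1}: for \(g_n\to g_0\) it supplies diffeomorphisms \(\phi_n\colon N_0\to N_n\) with \(\phi_n^*h_n\to h_0\) in \(C^\infty\), convergence \(\varphi_n\to\varphi_0\) modulo conjugation by \(\Iso(N_0,h_0)\subseteq\Diff(N_0)\), and \(a_n\to a_0\); passing to a subsequence along which the conjugating isometries converge (here compactness of \(\Iso(N_0,h_0)\) is used) yields \(([\phi_n^*h_n,\varphi_n(1)],a_n)\to([h_0,\varphi_0(1)],a_0)\) in the target, and since every subsequence has such a further subsequence, the whole sequence converges.

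Injectivity, surjectivity onto \(A\), and the inverse I would handle together. If \(\Phi([g])=\Phi([g'])\), then \(a=a'\) and \(h'=\beta_*h\), \(\varphi'(1)=\beta\varphi(1)^{\pm 1}\beta^{-1}\) for some \(\beta\in\Diff(N_0)\); the isometry \((\beta,\mathrm{id})\) (plus sign) or \((\beta,\,t\mapsto -t)\) (minus sign) of \((N_0,h)\times\R\to(N_0,h')\times\R\) intertwines the deck groups and descends to an isometry \((M,g)\to(M,g')\), so \([g]=[g']\). Conversely, given \(([h,\varphi],a)\in A\), the product metric \(h+dt^2\) on \((N_0,h)\times\R\) is invariant (since \(\varphi\in\Iso(h)\)) under the free isometric \(\mathbb{Z}\)-action \((x,t)\mapsto(\varphi(x),t+a)\) and descends to a non-negatively Ricci curved metric \(\hat g(h,\varphi,a)\) on the mapping torus \(T^a_\varphi\); as \(\varphi\in B\), \(T^a_\varphi\) is diffeomorphic to \(M\), and since \(N_0\) is compact the Ricci soul of \(\hat g(h,\varphi,a)\) is \((N_0,h)\), so \([\hat g(h,\varphi,a)]\in\mathcal{M}'_{\Ric\geq 0}(M)\). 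The same computation as for injectivity shows \(([h,\varphi],a)\mapsto[\hat g(h,\varphi,a)]\) is constant on \(\Diff(N_0)\times\tau\)-orbits and is a two-sided inverse to \(\Phi\); in particular \(\Phi\) is a bijection onto \(A\) (it lands in \(A\) because \(\varphi(1)\in\Iso(h)\) by construction and \(\varphi(1)\in B\) as its mapping torus is \(M\)).

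The hard part, and what I expect to be the real obstacle, is showing that this inverse \(A\to\mathcal{M}'_{\Ric\geq 0}(M)\) is continuous: Theorem~\ref{sec:metrics-mtimes-s1-1} only runs in the other direction, so from data converging in \(\mathcal{R}_{\Ric\geq 0}(N_0)\times(\Diff(N_0)/\tau)\times\R_{>0}\) I must manufacture convergence in the moduli space. My plan is to prove a ``continuity of the mapping torus construction'' statement: for \(\varphi\) \(C^\infty\)-close to \(\varphi_0\) and \(a\) close to \(a_0\), produce a diffeomorphism \(\Psi_{\varphi,a}\colon T^{a_0}_{\varphi_0}\to T^a_\varphi\) depending continuously on \((\varphi,a)\) and equal to the identity at \((\varphi_0,a_0)\). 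I would obtain it by lifting to universal covers and writing down a \(\mathbb{Z}\)-equivariant diffeomorphism of \(N_0\times\R\) of the form \((x,t)\mapsto(\Theta_t(x),\tfrac{a}{a_0}t)\), where \(\Theta_t\) comes from a short isotopy from \(\mathrm{id}\) to \(\varphi\varphi_0^{-1}\) (available since \(\varphi\varphi_0^{-1}\) is \(C^\infty\)-close to \(\mathrm{id}\)) extended to all of \(\R\) by the rule \(\Theta_{t+a_0}=\varphi\Theta_t\varphi_0^{-1}\). Pulling \(\hat g(h,\varphi,a)\) back along \(\Psi_{\varphi,a}\) then gives metrics on the \emph{fixed} manifold \(T^{a_0}_{\varphi_0}\cong M\) that converge in \(C^\infty\) to \(\hat g(h_0,\varphi_0,a_0)\) as \((h,\varphi,a)\to(h_0,\varphi_0,a_0)\), whence the classes converge in \(\mathcal{M}'_{\Ric\geq 0}(M)\). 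Apart from this lemma and the accompanying bookkeeping, no geometric input is needed beyond what is already in Lemma~\ref{sec:metrics-mtimes-s1} and Theorem~\ref{sec:metrics-mtimes-s1-1}.
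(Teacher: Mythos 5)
Your proposal is correct and follows essentially the same route as the paper: well-definedness by tracking the splitting isometry (which must be of the form $\phi_0\times\pm\Id_{\R}$), continuity of $\Phi$ from Theorem~\ref{sec:metrics-mtimes-s1-1} together with the fact that conjugating $\varphi_0$ by an isometry of $h_0$ does not change the class in the balanced product, injectivity via conjugacy of deck groups in $\Iso(N_0,h)\times\Iso(\R)$, and continuity of the inverse via an isotopy-induced diffeomorphism of mapping tori pulling the metrics back to a fixed copy of $M$. You also correctly identify the continuity of $\Phi^{-1}$ as the substantive step, and your equivariant extension $\Theta_{t+a_0}=\varphi\Theta_t\varphi_0^{-1}$ with the $t\mapsto(a/a_0)t$ rescaling is exactly the paper's construction, written out slightly more explicitly.
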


\

\begin{proof}
  To prove that \(\Phi\) is continuous, it suffices to show that the map
  \begin{align*}
    \mathcal{R}'_{\Ric \geq 0}(M)&\rightarrow \mathcal{R}_{\Ric\geq 0}(N_0)\times_{\Diff(N_0)}(\Diff(N_0)/\tau)& g&\mapsto [h,\varphi(1)]    
  \end{align*}
  is well-defined, continuous and \(\Diff(M)\)-invariant.
Here \(\mathcal{R}'_{\Ric \geq 0}(M)\) denotes the space of metrics on \(M\) whose Ricci soul is diffeomorphic to \(N_0\).

To prove well-definedness, 
let \(h_0\) and \(h_1\) be two metrics on \(N_0\) and \(\varphi_0\) and \(\varphi_1\)  be the corresponding elements of the diffeomorphism groups of \(N_0\) such that \(M\) is of the form described in Lemma \ref{sec:metrics-mtimes-s1}  for both pairs \((h_0,\varphi_0)\) and \((h_1,\varphi_1)\).
Then we have isometries which are compatible with the deck transformation groups
\begin{equation*}
  \tilde{\psi}_i:(N_0,h_i)\times \R \rightarrow \tilde{M}
\end{equation*}

Since the isometry \(\phi=\tilde{\psi}_0^{-1}\circ \tilde{\psi}_1\) maps lines to lines and \(N_0\) is perpendicular to all lines in \((N_0,h_i)\times \R\),
it follows that \(\phi\) is of the form \(\phi_0\times \pm\Id_\R\) where \(\phi_0:(N_0,h_0)\rightarrow (N_0,h_1)\) is an isometry.

Since the deck transformation groups in \((N_0,h_i)\times \R\rightarrow M\), \(i=1,2\) are conjugate via \(\phi\),
it follows that the \(\varphi_i\) are conjugate via \(\phi_0\).
This shows that the map \(\Phi\) is also well-defined.

Moreover, the \(\Diff(M)\) invariance is clear by the following argument: 
if \((h_0,\varphi_0)\) are as in Lemma \ref{sec:metrics-mtimes-s1} for the metric \(g\) on \(M\),
then for any diffeomorphism \(\phi\) of \(M\)  one has that \((h_0,\varphi_0)\) 
%satisfies relations as 
will also work in Lemma~\ref{sec:metrics-mtimes-s1} 
for the metric \(\phi^*g\).

Therefore it remains to show the continuity.
To do so, let \(g_0\) be a metric on \(M\) and \(g_n\) be a sequence of metrics on \(M\) converging to \(g_0\).
By the above theorem, we can find sequences \(h_n\) of metrics on \(N_0\) and 
diffeomorphisms  \(\varphi_n\) of \(N_0\) such that the following holds:
\begin{itemize}
\item the  \((h_n,\varphi_n)\) are as in Lemma \ref{sec:metrics-mtimes-s1} for \((M,g_n)\);
\item the \(h_n\) converge to \(h_0\);
\item there are isometries \(\alpha_n\) of \((N_0,h_0)\) such that \(\alpha_n\circ \varphi_0\circ \alpha_n^{-1}\) is close to \(\varphi_n\) in \(\Diff(N_0)\) for \(n\) large.
\end{itemize}
Since, moreover, \begin{equation*}
[h_0,\varphi_0]=[\alpha_n^*h_0,\varphi_0]=[h_0,\alpha_n\circ\varphi_0\circ\alpha_n^{-1}],
\end{equation*}
in the target of our map \(\Phi\) the continuity follows.

It is clear that \(A\) is the image of our map \(\Phi\).
Moreover, it follows from Lemma \ref{sec:metrics-mtimes-s1} that \(\Phi\) is also injective.
Indeed, if \(\Phi([g_1])=\Phi([g_2])=([h,\varphi],a)\), then there are Riemannian normal coverings \(\beta_i:(N_0,h)\times \R\rightarrow (M,g_i)\) for \(i=1,2\).
Therefore we have to show that the deck transformation groups \(\Gamma_i\) of these coverings are conjugate in \(\Iso(N_0,h)\times \Iso(\R)\).
Let \(\tau_i\) be generators of these groups.
Then we have \(\tau_i=(\varphi^{\pm 1}, f_{\pm a})\), where \(f_{\pm a}=f_{\mp a}^{-1}\) is the translation by \(a\) in the \(\R\)-factor.
Since \((\Id_{N_0},\pm \Id_{\R})\in \Iso(N_0,h)\times \Iso(\R)\), it follows that \(\tau_1\) is conjugate to \(\tau_2\) or \(\tau_2^{-1}\).
In particular \(\Gamma_1\) and \(\Gamma_2\) are conjugate in \(\Iso(N,h)\times \Iso(\R)\) and the injectivity follows.

Therefore, it only remains to show the continuity of the inverse map \(\Phi^{-1}\) on \(A\).

Denote by

\[\pi:\mathcal{R}(N_0)\times \Diff(N_0)\times \R_{>0}\rightarrow \mathcal{R}(N_0)\times_{\Diff(N_0)}(\Diff(N_0)/\tau)\times \mathbb{R}_{>0}\]
the projection.

Let \((h_n,\varphi_n,a_n)\in \pi^{-1}(A)\) converge to \((h_0,\varphi_0,a_0)\).
Moreover, let \(\delta_n\) be a diffeomorphism from \(M\) to the mapping torus  \(N_0\times [0,a_n]/\sim_n\) of \(\varphi_n\).
Here we have \((x,0)\sim_n(\varphi_n(x),a_n)\) for \(x\in N_0\).

Let \(\tilde{h}_n\) be the metric on \(N_0\times_{a_n\mathbb{Z}} \mathbb{R}\) induced from the product metric \(h_n+dt^2\) on \(N_0\times \mathbb{R}\). Here \(a_n\in a_n\mathbb{Z}\) acts as \(\varphi_n\) on \(N_0\) and as translation by \(a_n\) on \(\mathbb{R}\).

The inclusion \(N_0\times [0,a_n]\hookrightarrow N_0\times \mathbb{R}\) induces a natural diffeomorphism from  the mapping torus  of \(\varphi_n\)  to \(N_0\times_{a_n\mathbb{Z}} \mathbb{R}\).
So we can use this diffeomorphism to identify these two manifolds.

Let \(g_n=\delta_n^*\tilde{h}_n\) be the induced metric on \(M\).

For \(n\) large enough, we can find a path \(\phi_{n,t}\) from the identity in \(\Diff(N_0)\) to \(\varphi_0\circ \varphi_n^{-1}\).
Denote by \(\phi_n\) the diffeomorphism
\begin{align*}
  N_0\times[0,a_n]/\sim_n &\rightarrow N_0\times [0,a_0]/\sim_0,& (x,t)&\mapsto (\phi_{n,t}(x),t)
\end{align*}
 induced by \(\phi_{n,t}\) on mapping tori.
Since \((\varphi_n,a_n)\) converges to \((\varphi_0,a_0)\), we can assume that
 the lift of \(\phi_n\) to \(N_0\times[0,a_n]\) is close to the identity.
 Since the \(h_n\) converge to \(h_0\), it also follows that \((\phi_n^{-1})^*\tilde{h}_n\) is close to \(\tilde{h}_0\) for large \(n\).

 Therefore \[(\delta_n^{-1}\circ \phi_n^{-1}\circ \delta_0)^*g_n=\delta_0^*\circ(\phi_n^{-1})^*\tilde{h}_n\]
 is close to \(g_0=\delta_0^*\tilde{h}_0\) for large \(n\).
 Hence it follows that the assignment \((h_n,\varphi_n,a_n)\mapsto [g_n]\) is continuous.
 Because this assignment induces the map \(\Phi^{-1}\) on \(A\) and \(A\) is equipped with the quotient topology, it follows that \(\Phi^{-1}\) in continuous.

Therefore the proof of the corollary is complete.
\end{proof}

\section{Path components and rational cohomology groups of products with flat tori}
\label{sec:products-with-higher}

In this section we investigate the components and rational cohomology groups 
of moduli spaces of non-negative Ricci curved metrics on
products of manifolds with tori. To this means, we first generalize Theorem~\ref{sec:metrics-mtimes-r} as follows:

\

\begin{theorem}
\label{sec:metrics-mtimes-t}
  Let \((M,g_0)\) be isometric to \((N_0,h_0)\times \R^k\) with \((N_0,h_0)\) a 
 compact Riemannian manifold and \(\mathbb{R}^k\) equipped with the standard flat metric. Assume that there are metrics \(g_n\) on \(M\) which converge in the \(C^r\)-topology, \(r> 1\), to \(g_0\) uniformly on compact subsets and 
  assume furthermore that \((M,g_n)\) is isometric to \((N_n,h_n)\times \R^k\) with \(N_n\) compact and \(\mathbb{R}^k\) as above.

Then for large enough \(n\in \N\) the following is true:
\begin{itemize}
  \item there are diffeomorphisms \(\phi_n: N_0 \rightarrow N_n\), such that
  \item the metrics \(\phi_n^*h_n\) converge in the \(C^r\)-topology to \(h_0\).
  \end{itemize}
\end{theorem}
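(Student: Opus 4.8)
The plan is to follow the strategy of the proof of Theorem~\ref{sec:metrics-mtimes-r}, replacing the single splitting line by a $k$-dimensional family of parallel lines, and then argue component by component. Concretely, fix a basis $e_1,\dots,e_k$ of $\R^k$. For each $x\in M$ and each $i$, let $\gamma^i_{x,n}\colon\R\to(M,g_n)$ be the unique unit-speed line through $x$ in the "$i$-th direction" — here I would first prove a generalised version of Lemma~\ref{sec:souls-ricci-souls} (stated as a remark in the excerpt) showing that in $(N_n,h_n)\times\R^k$ all unit-speed lines have the form $t\mapsto(y,v_1t+v_2)$, so the splitting $\R^k$-factor at $x$ is exactly the span of the $k$ linearly independent line directions through $x$, and the $N_n$-factor is its $g_n$-orthogonal complement. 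Using Lemma~\ref{sec:souls-ricci-souls-1} together with the $C^r$ ($r>1$, so in particular $C^2$) convergence $g_n\to g_0$, every subsequence of $\gamma^i_{x,n}$ has a sub-subsequence converging in $C^1$ on compact subsets to a geodesic through $x$ in $(M,g_0)$, and the same triangle-inequality/length argument as in Theorem~\ref{sec:metrics-mtimes-r} shows the limit geodesic is a line; since the splitting of $(M,g_0)$ is determined, this limit must be the coordinate line $\gamma^i_{x,0}$. As all subsequences have the same limit, $\gamma^i_{x,n}\to\gamma^i_{x,0}$ in $C^1$ uniformly on compact subsets of $\R$, uniformly in $x$ on compact subsets of $M$.

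Next I would package this into vector fields: set $X^i_n(x):=\tfrac{d}{dt}\gamma^i_{x,n}(0)$, so $X^1_n,\dots,X^k_n$ are $g_n$-orthonormal, $g_n$-parallel, and $X^i_n\to X^i_0$ in $C^0$ uniformly on compact sets. Identify $N_n$ with $\psi_n^{-1}(N_n\times\{0\})\subset M$, so that $T_pN_n=\langle X^1_n,\dots,X^k_n\rangle^{\perp_{g_n}}$, and let $\psi'_n\colon M\to N_n$ be the projection; its differential is the $g_n$-orthogonal projection off the span of the $X^i_n$, i.e. $d\psi'_n(v)=v-\sum_{i=1}^k g_n(v,X^i_n)X^i_n$. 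By the uniform convergence $X^i_n\to X^i_0$ and $g_n\to g_0$, the composite $\phi_n:=\psi'_n\circ\psi_0^{-1}\circ\iota_0\colon N_0\to N_n$ is, for $n$ large, an immersion, hence a covering, and it induces an isomorphism on $\pi_1$ (being $C^0$-close to the diffeomorphism $\psi'_0\circ\psi_0^{-1}\circ\iota_0=\Id_{N_0}$), so it is a diffeomorphism. Finally, the same Leibniz-rule computation as at the end of the proof of Theorem~\ref{sec:metrics-mtimes-r} applies verbatim, now with the extra sum over $i$: for vector fields $X,Y,Z$ on $N_0$,
\[
X\bigl(\phi_n^*h_n(Y,Z)\bigr)=Xg_n(Y,Z)-\sum_{i=1}^k g_n(\nabla^n_XY,X^i_n)\,g_n(Z,X^i_n)-\sum_{i=1}^k g_n(\nabla^n_XZ,X^i_n)\,g_n(Y,X^i_n),
\]
where we used $\nabla^nX^i_n=0$ (each $X^i_n$ is parallel, corresponding to a constant coordinate field on $N_n\times\R^k$); letting $n\to\infty$ and noting $g_0(Y,X^i_0)=0=g_0(Z,X^i_0)$ for tangent vectors to $N_0$, the right-hand side converges uniformly on $N_0$ to $Xg_0(Y,Z)=Xh_0(Y,Z)$. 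Handling the higher derivatives up to order $r$ is done in the same spirit — differentiating the projection formula and the geodesic ODE system once more — and gives $C^r$ convergence $\phi_n^*h_n\to h_0$.

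The only genuinely new point compared to Theorem~\ref{sec:metrics-mtimes-r}, and the step I expect to be the main obstacle, is controlling the \emph{full $k$-frame} $X^1_n,\dots,X^k_n$ simultaneously rather than a single vector field: one must know that the $k$ line directions through $x$ remain linearly independent (indeed $g_n$-orthonormal) for large $n$ and that the splitting they define really is $\psi_n$'s splitting. This is exactly what the generalised Lemma~\ref{sec:souls-ricci-souls} is for — it pins down that \emph{every} line in $(N_n,h_n)\times\R^k$ is a coordinate-direction line, so the $g_n$-parallel distribution spanned by all line directions is precisely the $T\R^k$-distribution, hence its orthogonal complement is $TN_n$ and $\psi'_n$ is the genuine bundle projection. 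Once that structural fact is in place, everything else is a routine, if notationally heavier, repetition of the $k=1$ argument, which is precisely why the authors chose to present $k=1$ first.
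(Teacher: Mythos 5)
Your overall architecture matches the paper's: pull back the coordinate directions of the splitting, show the orthogonal projection onto the $N$-factor converges, conclude $\phi_n$ is a covering and hence a diffeomorphism, and run the Leibniz computation. But there is a genuine error at exactly the step you flag as ``the main obstacle''. You claim that every subsequential limit of $\gamma^i_{x,n}$ ``must be the coordinate line $\gamma^i_{x,0}$'' because ``the splitting of $(M,g_0)$ is determined'', and hence that the full sequence $X^i_n$ converges to $X^i_0$. This does not follow. The generalised Lemma~\ref{sec:souls-ricci-souls} only says that every line in $(N,h)\times\R^k$ has the form $t\mapsto(y,v_1t+v_2)$ with $v_1$ an \emph{arbitrary} unit vector in $\R^k$; for $k>1$ there is a whole $(k-1)$-sphere of lines through each point, so there is no intrinsic ``$i$-th direction'' and no uniqueness to force the limit. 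The isometries $\psi_n$ are only determined up to post-composition with isometries of $N_n\times\R^k$, which include rotations of the $\R^k$-factor; composing $\psi_n$ with a rotation by an angle $\theta_n$ that does not converge gives $X^i_n$ with several distinct subsequential limits, all of them lines of $(M,g_0)$ but none of them equal to $X^i_0$. This is precisely the point where the $k=1$ argument (where the line through $x$ from one end to the other is literally unique) fails to generalise verbatim.

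The gap is repairable, and the repair is what the paper does: one only shows that every subsequence of $(X^1_n,\dots,X^k_n)$ has a sub-subsequence converging to \emph{some} $g_0$-orthonormal frame of the $\R^k$-distribution of $g_0$ (orthonormality passes to the limit because $g_n\to g_0$), and observes that everything downstream depends only on the \emph{span}: the map $v\mapsto v-\sum_i g_n(v,X^i_n)X^i_n$ is the orthogonal projection onto the complement of that span, and the correction terms $\sum_i g_n(\nabla^n_XY,X^i_n)g_n(Z,X^i_n)$ equal $g_n(\nabla^n_XY,P_nZ)$ with $P_n$ the projection onto the span, whose limit vanishes because $\nabla^0_XY$ is tangent to the totally geodesic $N_0$. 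So replace ``the frame converges'' by ``the orthogonal projection converges'' throughout and your proof goes through. (A smaller quibble: your justification that $\phi_n$ is a $\pi_1$-isomorphism via being $C^0$-close to $\Id_{N_0}$ is shaky, since the slice $\psi_n^{-1}(N_n\times\{0\})$ need not be close to $N_0$ in $M$; the clean argument is that $\psi'_n$ and $\iota_0$ are each $\pi_1$-isomorphisms onto $\pi_1(M)$ modulo the $\R^k$-factor.)
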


\

\begin{proof}
Let \(\psi_n:(M,g_n)\rightarrow (N_n,h_n)\times \R^k\) be isometries.
For each \(n\), let \(Y_{in}=\psi_n^*\frac{\partial}{\partial x_i}\), \(i=1,\dots,k\), where the \(\frac{\partial}{\partial x_i}\) denote the standard coordinate vector fields on \(\mathbb{R}^k\).
Then the \(Y_{in}\) are parallel vector fields on \(M\) whose integral curves are lines in \((M,g_n)\).
Moreover, \(Y_{1n}(x),\dots,Y_{kn}(x)\) form an orthonormal basis of \(D_{\psi_n(x)}\psi_n^{-1}(\R^k)\) for each \(x\in M\).

Since lines in \((M,g_n)\) converge to lines in \((M,g_0)\),
for each \(i\) and each subsequence \(Y_{in_l}\) there is a subsequence \(Y_{in_{l_j}}\) which converges to some vector field \(Y_{i0}\) whose integral curves are lines.
The \(Y_{i0}\) might still depend on the subsequences. But as we will show next, their span does not. Their common span is equal to  \( D\psi_0^{-1}(\mathbb{R}^k)\).

Indeed, since the integral curves of the \(Y_{i0}\) are lines, we have \(Y_{i0}(x)\in D\psi_0^{-1}(\mathbb{R}^k)\) for all \(x\in M\) and \(i=1,\dots,k\).
Moreover, because the \(Y_{1n_{l_j}}(x),\dots,Y_{kn_{l_j}}(x)\) are orthonormal and the whole sequence of the \(g_n\) converges to \(g_0\), the \(Y_{10}(x),\dots,Y_{k0}(x)\) form an orthonormal basis of \(D_{\psi_0(x)}\psi_0^{-1}(\R^k)\).

Therefore the orthogonal projections \(T_xM\rightarrow D_{\psi_n(x)}\psi_n^{-1}(\R^k)\) converge to the orthogonal projection \(T_xM\rightarrow D_{\psi_0(x)}\psi_0^{-1}(\R^k)\).

Let us now identify each \(N_n\) with the image under the inclusion map

\begin{equation*}
N_n=N_{n}\times \{0\}\hookrightarrow N_n\times \mathbb{R}^k\rightarrow_{\psi_n}M.  
\end{equation*}

This identifies \(T_pN_n\) with \( \langle Y_{1n}(\psi_n^{-1}(p,0)),\dots,Y_{kn}(\psi_n^{-1}(p,0)) \rangle^{\perp_{g_n}} \subset T_{\psi_n^{-1}(p,0)} M\).

Moreover, the differential of the projection
\begin{equation*}
  \psi_n':M\rightarrow_{\psi_n^{-1}}N_n\times \mathbb{R}^k\rightarrow N_n
\end{equation*}
is then just the orthogonal projection
\[d\psi_n'(v)=v-\sum_{i=1}^kg_n(v,Y_{in})Y_{in}.\]
This means that,  for sufficiently large \(n\), \(\phi_n=\psi'_n\circ \psi_0\) is an immersion and therefore a covering.
Since \(\phi_n\) induces an isomorphism on fundamental groups, it follows that \(\phi_n\) is a diffeomorphism.
A similar computation as in the proof of Theorem~\ref{sec:metrics-mtimes-r} then shows that the metrics \(\phi_n^*h_n\) converge in \(C^r\)-topology to \(h_0\).

Indeed, if \(X,Y,Z\) are vector fields on \(N_0\), we can compute
\begin{align*}
  X(\phi_n^*h_n(Y,Z))&=Xg_n(Y,Z)-\sum_{i=1}^kX(g_n(Y,Y_{in})g_n(Z,Y_{in}))\\
                     &= Xg_n(Y,Z)-\sum_{i=1}^k\big[(g_n(\nabla^n_X Y,Y_{in})+g_n(Y,\nabla^n_X Y_{in}))g_n(Z,Y_{in})\\
                     &+(g_n(\nabla^n_X Z,Y_{in})+g_n(Z,\nabla^n_X Y_{in}))g_n(Y,Y_{in}))\big]\\
                     &=Xg_n(Y,Z)-\sum_{i=1}^k\big[g_n(\nabla^n_X Y,Y_{in})g_n(Z,Y_{in})+g_n(\nabla^n_X Z,Y_{in})g_n(Y,Y_{in})\big]\\
                     &\rightarrow Xg_0(Y,Z)=Xh_0(Y,Z) \text{ as } n\rightarrow \infty.
\end{align*}

Here \(\nabla^n\) denotes the Riemannian connection on \((M,g_n)\).
Note here that under the isometry \((M,g_n)\cong (N_n\times \R^k,h_n+ \sum_{i=1}^kdx_i^2)\), the vector field \(Y_{in}\) corresponds to a constant length vector field  on \(N_n\times \R^k\) tangent to the second factor.
Hence, \(\nabla^n Y_{in}=0\) follows.

Moreover, to see that \(g_n(\nabla^n_X Y,Y_{in})g_n(Z,Y_{in})\) converges to zero as \(n\) grows to infinity one can argue as follows.
It suffices to show that for every sequence \(n_k\in\mathbb{N}\) with \(\lim_{k\to \infty}n_k=\infty\) there is subsequence \(n_{k_l}\) such that
\[\lim_{l\to\infty}g_{n_{k_l}}(\nabla^{n_{k_l}}_X Y,Y_{i{n_{k_l}}})g_{n_{k_l}}(Z,Y_{i{n_{k_l}}})=0.\]
  Notice that we already know that there is a subsequence \(n_{k_l}\) such that \(Y_{in_{k_l}}\) converges to a parallel vector field tangent to the \(\mathbb{R}^k\)-factor.
  Moreover, \(\nabla^{n_{k_l}}_X Y\) converges to \(\nabla^0_XY\) which is tangent to the \(N_0\)-factor of \(M\), because \(N_0\) is totally geodesic in \(M\).
  Since \(g_n\) converges to \(g_0\) the claim follows.

Notice, moreover, that the above convergence is uniform on \(N_0\).

 We can deal with higher derivatives of the metrics in a similar way, and hence the claim follows.
\end{proof}

Now we are in the position to generalize Corollary \ref{sec:metr-prod-with-1} to products \(M\times T^k\) with tori of dimension greater than one.
To do this we first state the following consequence of the s-cobordism theorem.

\begin{lemma}
  \label{sec:path-comp-rati-1}
  Let \(M_1\) and \(M_2\) be simply connected closed manifolds of dimension at least \(5\) and \(E_i\rightarrow T^k\), \(i=1,2\), be \(M_i\)-bundles such that \(E_1\) and \(E_2\) are diffeomorphic.
  Then \(M_1\) and \(M_2\) are diffeomorphic.
\end{lemma}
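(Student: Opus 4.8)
The plan is to reduce the statement to the classical fibering/s-cobordism machinery by first passing to the circle bundle obtained from a coordinate circle of the torus, and then peeling off the remaining torus factors one at a time. Concretely, write $T^k = S^1 \times T^{k-1}$ and restrict each bundle $E_i \to T^k$ to $S^1 \times \{pt\}$; call the resulting $M_i$-bundle over $S^1$ the manifold $W_i$, which is the mapping torus of a diffeomorphism $\varphi_i$ of $M_i$. Since $E_i \to T^k$ is an $M_i$-bundle, $E_i$ is itself a bundle over $T^{k-1}$ with fiber $W_i$. The key point is that a diffeomorphism $E_1 \to E_2$, together with the fact that these total spaces are aspherical in the torus directions and that the $M_i$ are simply connected, lets us recover a fiber-homotopy-equivalence, and then (by the s-cobordism theorem, the $M_i$ having dimension $\ge 5$) an actual bundle isomorphism over $T^{k-1}$, so that $W_1$ and $W_2$ are diffeomorphic. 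Iterating this descent $k-1$ times reduces us to the case $k=1$.

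For the base case $k=1$: here $W_i$ is the mapping torus of $\varphi_i \in \Diff(M_i)$, and $W_1 \cong W_2$. One reads off $\pi_1(W_i) \cong \mathbb{Z}$ (using $\pi_1(M_i)=1$), and a diffeomorphism $W_1 \to W_2$ must, after composing with a deck transformation if necessary, respect the infinite cyclic covers; these covers are $M_1 \times \mathbb{R}$ and $M_2 \times \mathbb{R}$ respectively. Thus $M_1 \times \mathbb{R} \cong M_2 \times \mathbb{R}$, and now the h-cobordism theorem finishes the job exactly as in the proof of Corollary~\ref{sec:metr-prod-with-1}: the two ends of $M_1 \times \mathbb{R}$ are diffeomorphic to $M_1$ and, being separated by an h-cobordism (simply connected, dimension $\ge 6$), $M_1$ and $M_2$ are diffeomorphic. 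Alternatively one avoids the bundle-theoretic descent entirely by noting inductively that $E_i \times \mathbb{R}^k$ is the universal cover only after unwrapping $\pi_1 = \mathbb{Z}^k$, giving directly $M_1 \times \mathbb{R}^k \cong M_2 \times \mathbb{R}^k$, and then applying the h-cobordism theorem $k$ times (each step strips off one $\mathbb{R}$ factor, the intermediate manifolds remaining simply connected and of dimension $\ge 5$, hence $\ge 6$ after the first product).

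In fact this second route is cleaner and I would take it: the total space $E_i$ has $\pi_1(E_i) \cong \mathbb{Z}^k$ (since $M_i$ is simply connected and the bundle is over $T^k$), the universal cover of $E_i$ is an $M_i$-bundle over $\mathbb{R}^k$, which is trivial as $\mathbb{R}^k$ is contractible, hence diffeomorphic to $M_i \times \mathbb{R}^k$. A diffeomorphism $E_1 \to E_2$ lifts to a diffeomorphism of universal covers $M_1 \times \mathbb{R}^k \to M_2 \times \mathbb{R}^k$. Now apply the h-cobordism/splitting argument: $M_1 \times \mathbb{R}^k$ is diffeomorphic to the interior of $M_1 \times D^k$, whose boundary connected structure shows $M_1 \times \mathbb{R}^{k-1}$ and $M_2 \times \mathbb{R}^{k-1}$ are h-cobordant across a simply connected h-cobordism; since $\dim(M_i \times \mathbb{R}^{k-1}) \ge 5$, this h-cobordism is a product, so $M_1 \times \mathbb{R}^{k-1} \cong M_2 \times \mathbb{R}^{k-1}$, and we induct down to $k=0$.

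The main obstacle is making the descent step rigorous: extracting from a diffeomorphism of total spaces the information that the fibers are diffeomorphic, rather than merely homotopy equivalent, which is precisely where the (s- or h-)cobordism theorem and the dimension hypothesis $\dim M_i \ge 5$ enter, and where one must be careful that the relevant cobordism is simply connected (guaranteed here because the $M_i$ are, so that $\pi_1$ of the cobordism between $M_1 \times \mathbb{R}^{k-1}$ and $M_2 \times \mathbb{R}^{k-1}$ is trivial). Everything else — the computation of fundamental groups, the triviality of bundles over contractible bases, lifting diffeomorphisms to covers — is routine.
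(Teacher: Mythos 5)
There is a genuine gap, and it sits exactly in the route you declare to be the cleanest and the one you would take. Passing to universal covers only gives you \(M_1\times \mathbb{R}^k\cong M_2\times \mathbb{R}^k\), and the implication ``\(M_1\times\mathbb{R}^k\cong M_2\times\mathbb{R}^k\Rightarrow M_1\cong M_2\), by applying the h-cobordism theorem \(k\) times'' does not work: the h-cobordism theorem is a statement about \emph{compact} cobordisms, whereas your intermediate slices \(M_i\times\mathbb{R}^{k-1}\) are non-compact for \(k\geq 2\), so the region you describe between them is not a cobordism to which the theorem applies. Worse, the implication you need is simply false in general: by Mazur's stable diffeomorphism theorem, a tangential homotopy equivalence of closed \(n\)-manifolds gives \(M_1\times\mathbb{R}^q\cong M_2\times\mathbb{R}^q\) for \(q\) large, so for instance an exotic \(7\)-sphere \(\Sigma\) satisfies \(\Sigma\times\mathbb{R}^q\cong S^7\times\mathbb{R}^q\) while \(\Sigma\not\cong S^7\). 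Thus the diffeomorphism type of the universal cover genuinely loses the information the lemma is about, and no amount of care with the h-cobordism argument can recover it. Your first route has the complementary problem: the assertion that a diffeomorphism \(E_1\to E_2\) can be upgraded to a bundle isomorphism over \(T^{k-1}\) is unjustified and is essentially the statement being proved.

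The paper resolves the tension you noticed (``be careful that the relevant cobordism is simply connected'') in the opposite way: it gives up simple connectivity in order to keep compactness. One unwraps \emph{one} circle at a time: the cover of \(E_i\) corresponding to a \(\mathbb{Z}^{k-1}\)-summand of \(\pi_1(E_i)\cong\mathbb{Z}^k\) is \(E_i'\times\mathbb{R}\), where \(E_i'\) is a \emph{closed} \(M_i\)-bundle over \(T^{k-1}\). From \(E_1'\times\mathbb{R}\cong E_2'\times\mathbb{R}\) one obtains a compact h-cobordism between \(E_1'\) and \(E_2'\) whose fundamental group is \(\mathbb{Z}^{k-1}\) — not trivial — and one invokes the s-cobordism theorem together with the vanishing of the Whitehead group of a free abelian group (Bass--Heller--Swan) to conclude \(E_1'\cong E_2'\); induction on \(k\) finishes the proof. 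So the missing ingredient in your write-up is precisely the s-cobordism theorem with \(\mathrm{Wh}(\mathbb{Z}^{k-1})=0\) applied to compact, non-simply-connected slices, in place of the h-cobordism theorem applied to non-compact, simply connected ones.
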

\begin{proof}
  We prove this lemma by induction on \(k\). If \(k=0\) there is nothing to show.
  Let us therefore assume \(k\geq 1\) and that the claim is proved for \((k-1)\)-dimensional tori \(T^{k-1}\).

  Note that \(\pi_1(E_1)=\pi_1(E_2)=\pi_1(T^k)=\mathbb{Z}^{k-1}\oplus \mathbb{Z}\).
  Consider the coverings \(\tilde{E}_1\) and \(\tilde{E}_2\) of \(E_1\) and \(E_2\) corresponding to
  the  \(\mathbb{Z}^{k-1}\)-summand of \(\pi_1(E_i)\).
  Then \(\tilde{E}_i\) is diffeomorphic to \(E_i'\times \mathbb{R}\), where \(E_i'\) is an \(M_i\)-bundle over \(T^{k-1}\).
  Moreover, \(\tilde{E}_1\) is diffeomorphic to \(\tilde{E}_2\).

  Note now that the Whitehead torsion of the free abelian group \(\pi_1(E_i')=\mathbb{Z}^{k-1}\) vanishes.
  Therefore it follows from the s-cobordism theorem that \(E_1'\) and \(E_2'\) are diffeomorphic.
  Hence the claim follows from the induction hypothesis.
\end{proof}

\begin{cor}
\label{sec:products-with-higher-1}
  Let \(M\) be a simply connected closed manifold of dimension at least \(5\)
  that admits a metric with non-negative Ricci curvature.
  Then there is a retraction
  \begin{equation*}
    \mathcal{M}_{Ric\geq 0}(M\times T^k)\rightarrow \mathcal{M}_{\Ric\geq 0}(M),
  \end{equation*}
 In particular, we have that the moduli space \(\mathcal{M}_{\Ric\geq 0}(M\times T^k)\) has at least as many path components as \(\mathcal{M}_{\Ric \geq 0}(M)\).
\end{cor}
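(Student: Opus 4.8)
The plan is to run the argument of Corollary~\ref{sec:metr-prod-with-1} with the circle replaced by $T^k$, using Theorem~\ref{sec:metrics-mtimes-t} in place of Theorem~\ref{sec:metrics-mtimes-r} and Lemma~\ref{sec:path-comp-rati-1} in place of the $h$-cobordism theorem. First I would define the map
\[
  \mathcal{M}_{\Ric\geq 0}(M\times T^k)\longrightarrow\mathcal{M}_{\Ric\geq 0}(M)
\]
by pulling back a non-negatively Ricci curved metric $g$ on $M\times T^k$ to the universal covering $M\times\R^k$ and sending it to its Ricci soul. By Lemma~\ref{sec:metrics-mtimes-s1}, applied to $M\times T^k$ (whose fundamental group $\mathbb{Z}^k$ is torsion free), $(M\times T^k,g)$ is isometric to a bundle with closed fiber the Ricci soul $(N,h)$ over a closed flat manifold $F$ with $\pi_1(F)\cong\mathbb{Z}^k$, which by the Bieberbach theorems is diffeomorphic to $T^k$. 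Moreover $N\times\R^k$ and $M\times\R^k$ are both the Riemannian universal covering of $M\times T^k$, so $N$ is homotopy equivalent to $M$ and hence simply connected of dimension $\dim M$. In particular $M\times T^k$ is at the same time an $N$-bundle over $T^k$ and, trivially, an $M$-bundle over $T^k$.

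The step I expect to be the main obstacle --- and the one where the hypotheses on $M$ are used --- is checking that this assignment really takes values in $\mathcal{M}_{\Ric\geq 0}(M)$, i.e.\ that the Ricci soul $N$ is diffeomorphic to $M$. This is precisely the content of Lemma~\ref{sec:path-comp-rati-1}: since $M$ and $N$ are simply connected closed manifolds of dimension at least $5$ and $M\times T^k$ is simultaneously an $M$-bundle and an $N$-bundle over $T^k$, we get $M\cong N$. Fixing such a diffeomorphism, the assignment sending $g$ to $[h]$ is then well defined --- the Ricci soul of a metric is determined up to isometry by the Splitting Theorem --- and $\Diff(M\times T^k)$-invariant, so it descends to the asserted map of moduli spaces.

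For continuity I would argue as in the proof of Theorem~\ref{sec:metrics-mtimes-s1-1}: if $g_n\to g_0$ on the (compact) manifold $M\times T^k$, then the pulled-back metrics converge in the $C^r$-topology uniformly on compact subsets of $M\times\R^k$, so Theorem~\ref{sec:metrics-mtimes-t} yields diffeomorphisms $\phi_n\colon N_0\to N_n$ with $\phi_n^*h_n\to h_0$; transporting to $M$ via the fixed identifications, the value of the soul map at $g_n$ is the class of $\phi_n^*h_n$, and this converges to $[h_0]$ by continuity of the projection $\mathcal{R}_{\Ric\geq 0}(M)\to\mathcal{M}_{\Ric\geq 0}(M)$. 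A section is obtained by sending a metric $h$ on $M$ to the product $h+\sum_{i=1}^k dx_i^2$ with the standard flat torus, whose Ricci soul is again $(M,h)$; hence the composite is the identity on $\mathcal{M}_{\Ric\geq 0}(M)$, giving the retraction. The final assertion then follows because a retraction is surjective on path components.
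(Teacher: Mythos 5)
Your proposal is correct and follows essentially the same route as the paper: the soul map is well defined because Lemma~\ref{sec:metrics-mtimes-s1} together with Lemma~\ref{sec:path-comp-rati-1} identifies the Ricci soul with \(M\), continuity comes from Theorem~\ref{sec:metrics-mtimes-t} applied on the universal covering, and the section is the product with a flat torus. You have merely spelled out in more detail the steps the paper's very terse proof leaves implicit.
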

\begin{proof}
  Note first that by Lemma~\ref{sec:metrics-mtimes-s1} and Lemma~\ref{sec:path-comp-rati-1} the Ricci soul of any non-negatively Ricci curved metric on \(M\times T^k\) is diffeomorphic to \(M\).
  
  By Theorem~\ref{sec:metrics-mtimes-t}, the map which sends a metric on \(M\times T^k\) to its Ricci soul is continuous and well-defined.
A section to this map is given by sending a metric on \(M\) to the product metric with a product of \(k\) circles of length \(2\pi\).
Hence the claim follows.
\end{proof}

\begin{theorem}
\label{sec:products-with-higher-2}
  Let \(M\) be a non-negatively Ricci-curved manifold as in Lemma~\ref{sec:metrics-mtimes-s1}.
  Then there is a continuous map
  \begin{align*}
    \mathcal{M}_{\Ric\geq 0}(M)&\rightarrow \mathcal{M}_{\seccur=0}(F)& [g=h'\oplus_s h]&\mapsto [h],
  \end{align*}
  where \(h'\) and \(h\) are the metrics on the fiber and base of the Riemannian submersion \((M,g)\rightarrow F\) as in Lemma~\ref{sec:metrics-mtimes-s1}.

 Moreover, if \(M\) is diffeomorphic to \(N\times F\) for some non-negatively Ricci curved manifold \(N\), then the above map has a section.
  Therefore it is a retraction.
\end{theorem}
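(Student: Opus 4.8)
The plan is to check in turn that the assignment $[g]\mapsto[h]$ is well defined and $\Diff(M)$-invariant, that it is continuous, and that it admits a section when $M$ is smoothly a product $N\times F$; the last two together give the retraction. For well-definedness, I would start from Lemma~\ref{sec:metrics-mtimes-s1}: a metric $g$ with $\Ric\geq0$ on $M$ realizes $(M,g)$ as a Riemannian submersion over a closed flat manifold with fundamental group $\pi_1(M)$, and I take $[h]$ to be the class of its flat base metric. The Euclidean de Rham factor $\R^n$ of the universal cover $(\tilde M,\tilde g)\cong(N,h')\times\R^n$ is canonical --- it is the maximal flat factor, and since $N$ is closed it contains no line and hence has no flat factor of its own --- so $h$ is determined by $g$ up to the choice of an affine identification of the base with the model $F$ of Lemma~\ref{sec:metrics-mtimes-s1}; by the Bieberbach theorems any two such identifications differ by an affine diffeomorphism of $F$, so $[h]\in\mathcal{M}_{\seccur=0}(F)$ is well defined. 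If $\phi\in\Diff(M)$, then the induced automorphism of $\pi_1(M)\cong\pi_1(F)$ is, again by Bieberbach rigidity, realized by an affine self-diffeomorphism of $F$; comparing the splittings of $g$ and of $\phi^*g$ then yields $[h_{\phi^*g}]=[h_g]$, so the map descends to $\mathcal{M}_{\Ric\geq0}(M)$.

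For continuity it suffices to show that $\mathcal{R}_{\Ric\geq0}(M)\to\mathcal{M}_{\seccur=0}(F)$ is continuous. Given $g_n\to g_0$, I would pull back to universal coverings, where $\tilde g_n\to\tilde g_0$ uniformly on compact sets, and apply Theorem~\ref{sec:metrics-mtimes-t} to the splittings $(\tilde M,\tilde g_n)\cong(N,h'_n)\times\R^n$: after suitable diffeomorphisms of $N$ the fibre metrics converge to $h'_0$. To see that the base metrics also converge, I would reproduce the Gromov--Hausdorff argument from the proof of Theorem~\ref{sec:metrics-mtimes-s1-1}: shrinking the $N$-fibres of $(M,g_n)\to F_n$ by a factor $\frac1m$ gives manifolds that Gromov--Hausdorff converge to $F_n$ as $m\to\infty$, while the explicit formula for the shrunk metric in terms of $g_n$ and the parallel norm-one vector fields spanning the Euclidean directions (which converge by the proof of Theorem~\ref{sec:metrics-mtimes-t}) shows that for fixed $m$ these manifolds converge as $n\to\infty$ to the analogous rescaling of $(M,g_0)\to F_0$. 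Interchanging the two limits forces $F_n\to F_0$ in the Gromov--Hausdorff sense, and since every $F_n$ is diffeomorphic to the fixed flat manifold $F$, this gives $[h_n]\to[h_0]$ in $\mathcal{M}_{\seccur=0}(F)$.

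Finally, suppose $M$ is diffeomorphic to $N\times F$ with $N$ carrying a metric $h'$ of non-negative Ricci curvature. Fixing such an $h'$, I would define $\mathcal{M}_{\seccur=0}(F)\to\mathcal{M}_{\Ric\geq0}(M)$ by $[h]\mapsto[h'+h]$: the product metric has $\Ric\geq0$, and $h\mapsto h'+h$ is continuous and $\Diff(F)$-equivariant, so it descends to moduli spaces. Because $N$ is closed, the Euclidean de Rham factor of $(N,h')\times(\R^n,\mathrm{flat})$ is precisely the $\R^n$-factor, so the fibre/base decomposition of $h'+h$ provided by Lemma~\ref{sec:metrics-mtimes-s1} recovers $(N,h')$ and $(F,h)$; hence the composite $[h]\mapsto[h'+h]\mapsto[h]$ is the identity, and the map of the theorem is a retraction. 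The step I expect to be the real obstacle is the continuity --- specifically, the claim that the flat \emph{base} metrics converge: Theorem~\ref{sec:metrics-mtimes-t} only supplies convergence of the fibre metrics, so one genuinely needs the Gromov--Hausdorff rescaling trick of Theorem~\ref{sec:metrics-mtimes-s1-1}, carried out now over an arbitrary flat base rather than a circle, together with the fact that Gromov--Hausdorff convergence of flat manifolds all diffeomorphic to $F$ forces convergence of the corresponding points of $\mathcal{M}_{\seccur=0}(F)$.
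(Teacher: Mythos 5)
Your proposal is correct and follows essentially the same route as the paper: well-definedness via Lemma~\ref{sec:metrics-mtimes-s1}, continuity of the base-metric assignment via the fibre-shrinking Gromov--Hausdorff argument with an interchange of limits as in the proof of Theorem~\ref{sec:metrics-mtimes-s1-1}, and the section given by taking the product with a fixed non-negatively Ricci-curved metric on $N$. You have also correctly identified the crux (convergence of the flat base metrics, which Theorem~\ref{sec:metrics-mtimes-t} alone does not supply), which is exactly the point the paper's proof addresses with the rescaling trick.
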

\begin{proof}
  Note that convergence in the moduli space means convergence up to isometries.
  
  Let \([g_n]\in \mathcal{M}_{\Ric\geq 0}(M)\) converge to \([g_0]\).
  By Lemma \ref{sec:metrics-mtimes-s1},
  \((M,g_n)\) is isometric to the total space of a fiber bundle with base space  a flat manifold \((F,h_n)\).
  Moreover, by construction, the isometry class of \(h_n\) depends only on the isometry class of \(g_n\).
 
  Note that Gromov-Hausdorff convergence of flat metrics on \(F\) coincides with convergence in \(C^{\infty}\)-topology. 
  To prove the first claim, it thus suffices to show that the isometry classes of the metrics \(h_n\) on \(F\) Gromov-Hausdorff converge to the isometry class of \(h_0\).
  If we shrink the fibers of the Riemannian submersions \((M,g_n)\rightarrow (F,h_n)\), then the isometry classes of the shrunk metrics \((M,g_{n,m})\) Gromov-Hausdorff converge to the one of \((F,h_n)\).
  Thus \(\lim_{m\to \infty} [M,g_{n,m}]=[F,h_n]\).
  Now  \(\lim_{n\to\infty}[F,h_n]=[F,h_0]\) follows as in the proof of Theorem~\ref{sec:metrics-mtimes-s1-1} by interchanging the limits.
  Hence the first claim is proved.

  If \(M\) is diffeomorphic to \(N\times F\), a section is given by sending a metric on \(F\) to its product metric with some fixed non-negatively Ricci-curved metric on \(N\).
\end{proof}

\

\begin{prop}
\label{sec:path-comp-rati}
  If \(T^n\) is an \(n\)-dimensional standard torus, then the following holds:
  \begin{enumerate}
  \item The moduli space \(\mathcal{M}_{\sec=0}(T^n)\) of flat metrics on \(T^n\)  is simply connected.
  \item If \(n=1,2,3\), then \(\mathcal{M}_{\sec=0}(T^n)\) is contractible.
  \item If \(n=4\), then \(\pi_3(\mathcal{M}_{\sec=0}(T^n))\otimes \mathbb{Q}\cong \mathbb{Q}\).
  \item If \(n>4\), and \(n\neq 8,9,10\), then \(\pi_5(\mathcal{M}_{\sec=0}(T^n))\otimes \mathbb{Q}\cong \mathbb{Q}\).
  \end{enumerate}
\end{prop}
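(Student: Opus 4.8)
The plan is to use Wolf's homeomorphism $\mathcal{M}_{\sec=0}(T^n)\cong O(n)\backslash GL(n,\mathbb{R})/GL(n,\mathbb{Z})$ \cite{MR0322748} together with the observation recorded above that this orbit space is a model for $B_{\mathcal{FIN}}GL(n,\mathbb{Z})$. Write $X=GL(n,\mathbb{R})/O(n)$ for the space of positive definite symmetric $n\times n$ matrices; with its natural metric $X$ is a Hadamard manifold (isometric to $\R\times SL(n,\mathbb{R})/SO(n)$) on which $GL(n,\mathbb{Z})$ acts properly discontinuously by isometries with finite stabilizers, and $\mathcal{M}_{\sec=0}(T^n)=X/GL(n,\mathbb{Z})$. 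All four assertions are read off from this description. For $n=1$ the action of $GL(1,\mathbb{Z})=\{\pm1\}$ on $X=\R_{>0}$ is trivial, so the space is contractible; from now on let $n\ge 2$.

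For (1) I would invoke Armstrong's theorem on the fundamental group of an orbit space: since $X$ is simply connected and the action is properly discontinuous, $\pi_1(X/GL(n,\mathbb{Z}))$ is the quotient of $GL(n,\mathbb{Z})$ by the normal subgroup generated by the elements fixing a point of $X$. By the Cartan fixed point theorem every finite subgroup of $GL(n,\mathbb{Z})$, acting by isometries of the Hadamard manifold $X$, has a fixed point, so this normal subgroup contains all torsion. Finally $GL(n,\mathbb{Z})$ is generated by elements of finite order: $SL(2,\mathbb{Z})$ is generated by an element of order $4$ and one of order $6$, so embedding $SL(2,\mathbb{Z})$ in the various coordinate $2\times 2$ blocks exhibits all elementary matrices as products of torsion elements and hence generates $SL(n,\mathbb{Z})$, while adjoining the involution $\diag(-1,1,\dots,1)$ yields all of $GL(n,\mathbb{Z})$. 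Therefore $\pi_1(\mathcal{M}_{\sec=0}(T^n))=1$.

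The remaining parts rest on the identification $H^*(\mathcal{M}_{\sec=0}(T^n);\mathbb{Q})\cong H^*(GL(n,\mathbb{Z});\mathbb{Q})$, which follows by comparing $X/GL(n,\mathbb{Z})$ with the Borel construction $EGL(n,\mathbb{Z})\times_{GL(n,\mathbb{Z})}X$: the natural map between them has (stratumwise) fibers $B(\text{finite group})$, which are $\mathbb{Q}$-acyclic, so by the Leray spectral sequence it is a rational cohomology isomorphism, and the source is rationally $BGL(n,\mathbb{Z})$ since $X$ is contractible. As $\mathcal{M}_{\sec=0}(T^n)$ is simply connected and of finite type, a standard bootstrap via the rational Hurewicz theorem converts a vanishing pattern of $H^{\le k}(GL(n,\mathbb{Z});\mathbb{Q})$ into the same pattern for $\pi_{\le k}(\mathcal{M}_{\sec=0}(T^n))\otimes\mathbb{Q}$: if $H^i(GL(n,\mathbb{Z});\mathbb{Q})=0$ for $1\le i<k$ and $H^k(GL(n,\mathbb{Z});\mathbb{Q})\cong\mathbb{Q}$, then $\pi_i(\mathcal{M}_{\sec=0}(T^n))\otimes\mathbb{Q}=0$ for $i<k$ and $\pi_k(\mathcal{M}_{\sec=0}(T^n))\otimes\mathbb{Q}\cong\mathbb{Q}$. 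It then remains to feed in the known low-degree rational cohomology of $GL(n,\mathbb{Z})$ (see \cite{MR2195456} and the references therein): for $n=2,3$ one has $H^{i}(GL(n,\mathbb{Z});\mathbb{Q})=0$ for all $i\ge 1$, so the orbit space is simply connected and rationally acyclic, and contractibility is obtained from an explicit finite model for $X/GL(n,\mathbb{Z})$ (the modular orbifold for $n=2$, Soul\'e's well-rounded retract for $n=3$); for $n=4$ one has $H^i(GL(4,\mathbb{Z});\mathbb{Q})=0$ for $i=1,2$ and $H^3(GL(4,\mathbb{Z});\mathbb{Q})\cong\mathbb{Q}$, giving $\pi_3(\mathcal{M}_{\sec=0}(T^4))\otimes\mathbb{Q}\cong\mathbb{Q}$; and for $5\le n\le 7$ by direct computation, and for $n\ge 11$ by Borel's stability theorem (which identifies $H^{\le 5}(GL(n,\mathbb{Z});\mathbb{Q})$ with the stable exterior algebra on generators in degrees $5,9,13,\dots$), one has $H^i(GL(n,\mathbb{Z});\mathbb{Q})=0$ for $1\le i\le 4$ and $H^5(GL(n,\mathbb{Z});\mathbb{Q})\cong\mathbb{Q}$, giving $\pi_5(\mathcal{M}_{\sec=0}(T^n))\otimes\mathbb{Q}\cong\mathbb{Q}$.

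The main obstacle — and the reason for the exclusion $n\ne 8,9,10$ — is precisely this last input: the explicit computation of $H^{\le 5}(GL(n,\mathbb{Z});\mathbb{Q})$ via perfect-form/Voronoi cell complexes is carried out only for $n\le 7$, whereas the known homological stability range pinning degree $5$ to its stable value only reaches $n\ge 11$, so the three cases $n=8,9,10$ are covered by neither method. (The passage from "rationally acyclic and simply connected" to "contractible" for $n=3$ is likewise not formal and genuinely uses the explicit cell structure of Soul\'e's retract.) Everything else — Wolf's identification, Armstrong's and Cartan's theorems, the Borel-construction comparison, and the rational Hurewicz theorem — is routine.
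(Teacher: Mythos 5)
Your proposal is correct and follows essentially the same route as the paper: Wolf's identification with $O(n)\backslash GL(n,\mathbb{R})/GL(n,\mathbb{Z})$, Armstrong's theorem plus torsion generation of $GL(n,\mathbb{Z})$ for simple connectivity, the $E_{\mathcal{FIN}}$-interpretation to transfer the known rational cohomology of $BGL(n,\mathbb{Z})$ (Lee--Szczarba for $n=4$, Elbaz-Vincent--Gangl--Soul\'e for $n=5,6,7$, Borel and van der Kallen for $n\geq 11$), and the rational Hurewicz theorem. The only cosmetic differences are that you prove the rational cohomology comparison directly via the Borel construction where the paper cites L\"uck's Lemma 4.14, and for $n=2$ you invoke an explicit model where the paper uses a short dimension-and-torsion argument; both are fine.
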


\begin{proof}
  By \cite{MR0322748}, the space \(\mathcal{M}=\mathcal{M}_{\sec=0}(T^n)\)  is homeomorphic to the biquotient \[O(n)\backslash GL(n,\R)/GL(n,\mathbb{Z}).\]

The space \(O(n)\backslash GL(n,\R)\) of scalar products on \(\R^n\) is a model for the classifying space \(E_{\mathcal{FIN}} GL(n,\mathbb{Z})\) for the family \(\mathcal{FIN}\)  of finite subgroups of \(GL(n,\mathbb{Z})\).
 Hence it follows from Lemma 4.14 of \cite{MR2361088} that \(\mathcal{M}\) has the same rational cohomology groups as the classifying space \(B GL(n,\mathbb{Z})\).

The rational cohomology groups \(H^3(BGL(n,\mathbb{Z});\mathbb{Q})\) and \(H^5(BGL(n,\mathbb{Z});\mathbb{Q})\) of this space have been computed for \(n=4\) \cite{MR0491893}, for \(n=5,6,7\) \cite{MR3084439}, and for \(n\geq 11\) \cite{MR0387496}, \cite{MR586429}.
In the first case the first group is isomorphic to \(\mathbb{Q}\), whereas in the other cases the second group is isomorphic to \(\mathbb{Q}\).
Note that in all these cases these groups are the first non-trivial rational cohomology groups.

Hence the last two claims follow from the Hurewicz theorem once we have proved the first claim.

The second claim for \(n=1\) is trivial because \(GL(1,\mathbb{Z})\) is a finite group.
For \(n=3\) this claim was proven by Soul\'e \cite{MR0470141}. For \(n=2\) one can argue as follows.

 Taking the first claim for granted, \(O(n)\backslash GL(n,\R)_\pm/GL(n,\mathbb{Z})\) is a two-dimensional simply connected model for 
 \[B_{\mathcal{FIN}}GL(2,\mathbb{Z})=(E_{\mathcal{FIN}}GL(2,\mathbb{Z}))/GL(2,\mathbb{Z}).\]
 Here \(GL(2,\mathbb{R})_\pm\) is the subgroup of \(GL(2,\mathbb{R})\) consisting of those \(n\times n\)-matrices \(A\) with \(\det A=\pm 1\).

 Hence the only possibly non-trivial homology group of this space with integer coefficients is \(H_2\).
 Moreover, \(H_2\) must be torsion-free.
 But it is a classical result (see for example \cite{MR1954121}), that \[H_2(B_{\mathcal{FIN}}GL(2,\mathbb{Z});\mathbb{Q})\cong H_2(BGL(2,\mathbb{Z});\mathbb{Q})\] is the trivial group.
 Therefore this also holds with integer coefficients and the claim follows.

Hence it remains to prove that \(O(n)\backslash GL(n,\R)/GL(n,\mathbb{Z})\) is simply connected for \(n\geq 2\),
and this can be done as follows:

From \cite{armstrong} we obtain that
the fundamental group
 \[\pi_1\,(O(n)\backslash GL(n,\R)/GL(n,\mathbb{Z}))\] is isomorphic to \(GL(n,\mathbb{Z})/N\), where \(N\) is the normal subgroup of \(GL(n,\mathbb{Z})\) generated by those elements which have fixed points in \(O(n)\backslash GL(n,\R)\).
By the above discussion, it follows that the group  \(N\) is the subgroup of \(GL(n,\mathbb{Z})\) which is generated by all elements of finite order.

By Corollary 16.3 from \cite{MR0349811} and the remark following it, 
the group \(SL(n,\mathbb{Z})\) is generated by elementary matrices if \(n\geq 3\). Hence, it follows that \(SL(n,\mathbb{Z})\) is generated by the images of finitely many group homomorphisms \(SL(2,\mathbb{Z})\rightarrow SL(n,\mathbb{Z})\).
Since \(SL(2,\mathbb{Z})\) is generated by two elements of finite order, it follows that \(SL(n,\mathbb{Z})\) is generated by finitely many elements of finite order for every \(n\geq 1\).

Because \(GL(n,\mathbb{Z})\) is a semi-direct product of \(SL(n,\mathbb{Z})\) and \(\mathbb{Z}/2\), this implies in turn  that \(GL(n,\mathbb{Z})=N\). Hence the biquotient space
\[O(n)\backslash GL(n,\R)/GL(n,\mathbb{Z})\]
is simply connected. 
\end{proof}

\

\begin{cor}
\label{sec:products-with-higher-3}
 Let \(M\) be a simply connected closed smooth manifold which admits a metric
  with non-negative Ricci curvature, and let  \(T\) be a torus of dimension \(n\geq 4\), \(n\neq 8,9,10\).
  Then the moduli space \(\mathcal{M}_{\Ric\geq 0}(M\times T)\) of non-negatively Ricci curved metrics on $M\times T$
  has non-trivial higher rational cohomology groups and non-trivial higher rational homotopy groups.
\end{cor}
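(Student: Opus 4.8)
The plan is to combine the retraction results from the previous sections with the rational homotopy computation of Proposition~\ref{sec:path-comp-rati}. First I would invoke Corollary~\ref{sec:products-with-higher-1}: since $M$ is simply connected, closed, of dimension $\geq 5$ (this needs a brief remark — if $\dim M \leq 4$ one can replace $M$ by $M \times S^2$ or simply observe the low-dimensional cases separately, or absorb dimensions into the torus factor) and admits a metric of non-negative Ricci curvature, there is a retraction $\mathcal{M}_{\Ric\geq 0}(M\times T)\rightarrow \mathcal{M}_{\Ric\geq 0}(M)$. That by itself only propagates complexity \emph{downward}, so the real input is Theorem~\ref{sec:products-with-higher-2}: because $M\times T$ is visibly diffeomorphic to a product $N \times F$ with $F = T$ a flat torus (here $N = M$), the map $[g]\mapsto[h]$ sending a non-negatively Ricci curved metric to the flat metric on its base torus $F$ admits a section and is therefore a \emph{retraction} $\mathcal{M}_{\Ric\geq 0}(M\times T)\rightarrow \mathcal{M}_{\sec=0}(T^n)$.

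The key structural consequence is that a retraction $p\colon X \to Y$ induces, for every basepoint, split injections $p_*\colon \pi_k(X)\to\pi_k(Y)$ (split by the section's induced map) and split surjections $p^*\colon H^k(Y;\Q)\to H^k(X;\Q)$ is split injective the other way — more precisely, the section $s$ gives $s^*\circ p^* = \mathrm{id}$, so $p^*\colon H^k(Y;\Q)\hookrightarrow H^k(X;\Q)$ is a split monomorphism and $s_*\colon \pi_k(Y)\otimes\Q \hookrightarrow \pi_k(X)\otimes\Q$ is a split monomorphism. Hence any non-trivial higher rational homotopy or cohomology group of $\mathcal{M}_{\sec=0}(T^n)$ survives in $\mathcal{M}_{\Ric\geq 0}(M\times T)$. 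Now apply Proposition~\ref{sec:path-comp-rati}: for $n=4$ we have $\pi_3(\mathcal{M}_{\sec=0}(T^4))\otimes\Q\cong\Q$, and since $\mathcal{M}_{\sec=0}(T^4)$ is simply connected, the rational Hurewicz theorem gives $H^3(\mathcal{M}_{\sec=0}(T^4);\Q)\cong\Q$ as well (it is the first non-trivial reduced rational cohomology); for $n>4$, $n\neq 8,9,10$, we have $\pi_5(\mathcal{M}_{\sec=0}(T^n))\otimes\Q\cong\Q$ and correspondingly $H^5\cong\Q$. Pulling these classes back along $p^*$ and $s_*$ respectively yields non-trivial $H^3$ or $H^5$ and non-trivial $\pi_3$ or $\pi_5$ of $\mathcal{M}_{\Ric\geq 0}(M\times T)$, which is exactly the claim (and gives the refined statement quoted after Theorem~\ref{sec:products-with-higher-3} in the introduction, pinning down the degree as $3$ or $5$).

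The main obstacle is verifying the hypotheses of Theorem~\ref{sec:products-with-higher-2} for $M\times T$ cleanly: one must check that $M\times T$ is a manifold of the type handled by Lemma~\ref{sec:metrics-mtimes-s1} (torsion-free $\pi_1$, here $\pi_1\cong\Z^n$, so $F$ is genuinely the flat torus $T^n$ and $N$ the simply connected factor), and that the Ricci soul of \emph{any} non-negatively Ricci curved metric on $M\times T$ is diffeomorphic to $M$ — this last point is precisely Lemma~\ref{sec:path-comp-rati-1} combined with the $h$- and $s$-cobordism machinery, and requires $\dim M\geq 5$, which is why the dimension bookkeeping above matters. A secondary, purely bookkeeping, point is the basepoint issue: retractions preserve homotopy groups at any basepoint in the image, and since $\mathcal{M}_{\sec=0}(T^n)$ is connected (indeed simply connected) this causes no trouble, but one should note it. Once these are in place the argument is a one-line composition of Theorem~\ref{sec:products-with-higher-2} with Proposition~\ref{sec:path-comp-rati}.
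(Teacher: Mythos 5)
Your argument coincides with the paper's proof: the key input is the retraction of Theorem~\ref{sec:products-with-higher-2} onto \(\mathcal{M}_{\sec=0}(T^n)\) (sectioned by \(h\mapsto g_M+h\) for a fixed non-negatively Ricci curved metric \(g_M\) on \(M\)), combined with Proposition~\ref{sec:path-comp-rati} and the standard fact that a retraction with section split-injects rational homotopy and cohomology. The only superfluous element is your worry about \(\dim M\geq 5\): Theorem~\ref{sec:products-with-higher-2} needs neither Lemma~\ref{sec:path-comp-rati-1} nor any cobordism input, since its section exists as soon as the total space is diffeomorphic to some product \(N\times F\) with \(N\) non-negatively Ricci curved --- which \(M\times T\) manifestly is with \(N=M\), \(F=T\) --- so no dimension hypothesis (and no detour through Corollary~\ref{sec:products-with-higher-1}) is required, consistent with the corollary imposing none.
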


\begin{proof}
  By Theorem \ref{sec:products-with-higher-2}, the map  \(\mathcal{M}_{\Ric\geq 0}(M\times T)\rightarrow \mathcal{M}_{\seccur=0}(T)\) is a retraction.
  Therefore the cohomology of \(\mathcal{M}_{\Ric\geq 0}(M\times T)\) is at least as complicated as the cohomology of \(\mathcal{M}_{\seccur=0}(T)\).
  Hence the claim follows from Proposition \ref{sec:path-comp-rati} above.
\end{proof}

\normalsize

~\\
Wilderich Tuschmann\\ Karlsruher Institut f\"ur Technologie,
Institut f\"ur Algebra und Geometrie, Arbeitsgruppe Differentialgeometrie\\
Englerstr. 2, D-76131 Karlsruhe, Germany\\
\texttt{tuschmann@kit.edu}

~\\Michael Wiemeler\\Mathematisches Institut, WWU M\"unster\\ Einsteinstr. 62, D-48149 M\"unster, Germany\\
\texttt{wiemelerm@uni-muenster.de}
\end{document}